\numberwithin{equation}{section}
\theoremstyle{plain}
\newtheorem{thm}[equation]{Theorem}
\newtheorem{lemma}[equation]{Lemma}
\newtheorem{cor}[equation]{Corollary}
\newtheorem{prop}[equation]{Proposition}
\newtheorem{example}[equation]{Example}
\theoremstyle{definition}
\newtheorem{question}[equation]{Question}
\newtheorem{remark}[equation]{Remark}
\newcommand{\Z}{\ensuremath \mathbb{Z}}
\newcommand{\M}{\ensuremath \mathbb{M}}
\newcommand{\isom}{\ensuremath \cong}
\newcommand{\ann}{\ensuremath{\mbox{\rm ann}}}
\newlength{\dolz}\newlength{\dolzp}\newlength{\dolzr}
\newcounter{i}\newcounter{j}
\newcommand{\urmatrix}[6]{
\setlength{\dolzp}{0pt}
\@for\tmp:=#3\do{\setlength{\dolzr}{\widthof{$\scriptstyle\tmp$}}
\ifdim\dolzp<\dolzr\setlength{\dolzp}{\dolzr}\fi}
\setlength{\dolz}{\totalheightof{$\displaystyle\begin{array}{#1}#2\end{array}$}/2}
\setcounter{i}{0}\setlength{\dolzr}{0pt}
\@for\tmp:=#3\do{\stepcounter{i}\setcounter{j}{0}
\@for\tmr:=#5\do{\stepcounter{j}\ifnum\value{j}=\value{i}
\setlength{\dolz}{\dolz-\totalheightof{$\displaystyle\begin{array}{c}\tmr\end{array}$}}
\setlength{\dolzr}{\dolzp-\widthof{$\scriptstyle\tmp$}}
\raisebox{\dolz}{\hspace{\dolzr}$\scriptstyle\tmp$}
\hspace{-\dolzp}\fi}}
\hspace{\dolzp}\!
\stackrel{\setcounter{i}{0}
\@for\tmp:=#4,\do{\stepcounter{i}\setcounter{j}{0}
\@for\tmr:=#6\do{\stepcounter{j}\ifnum\value{j}=\value{i}
\setlength{\dolz}{\widthof{$\tmr$}+2\arraycolsep}\hspace{\dolz}\fi}
\setlength{\dolz}{0pt-\widthof{$\scriptstyle\tmp$}/2}
\hspace{\dolz}\tmp\hspace{\dolz}}}{\left(\begin{array}{#1}#2\end{array}\right)}}
\begin{document}

\title[Connections between unit-regularity, regularity, cleanness, and strong cleanness]{Connections between unit-regularity, regularity, cleanness, and strong cleanness of elements and rings}

\author{Pace P. Nielsen}
\address{Department of Mathematics, Brigham Young University, Provo, UT 84602, USA}
\email{pace@math.byu.edu}

\author{Janez \v{S}ter}
\address{Faculty of Mechanical Engineering, University of Ljublijana, A\v{s}ker\v{c}eva 6, 1000 Ljublijana, Slovenia}
\email{janez.ster@fs.uni-lj.si}

\keywords{(strongly) clean element/ring, (unit-)regular element/ring}
\subjclass[2010]{Primary 16E50, Secondary 16D70, 16S50, 16U99}

\begin{abstract}
We construct an example of a unit-regular ring which is not strongly clean, answering an open question of Nicholson.  We also characterize clean matrices with a zero column, and this allows us to describe an interesting connection between unit-regular elements and clean elements.  It is also proven that given an element $a$ in a ring $R$, if $a,a^2,\ldots, a^k$ are all regular elements in $R$ (for some $k\geq 1$), then there exists $w\in R$ such that $a^{i}w^{i}a^{i}=a^{i}$ for $1\leq i\leq k$, and a similar statement holds for unit-regular elements.  The paper ends with a large number of examples elucidating further connections (and disconnections) between cleanliness, regularity, and unit-regularity.
\end{abstract}

\maketitle

\section{Introduction}

Regular rings were defined by von Neumann in his study of continuous geometries, and have become a staple of noncommutative ring theory due to their simple definition and connection to decomposition theory.  An element $a\in R$ is said to be (von Neumann) \emph{regular} if there exists some $r\in R$ with $ara=a$.  We denote the set of all regular elements of $R$ by $\operatorname{reg}(R)$, and if $\operatorname{reg}(R)=R$ one says that $R$ is a \emph{regular ring}.  The element $r$ is called an \emph{inner inverse} for $a$, and it need not be unique.

As defined by Ehrlich \cite{EhrlichFirst}, an element $a\in R$ is \emph{unit-regular} if it has a unit inner inverse $u\in U(R)$. The set of unit-regular elements is denoted by $\operatorname{ureg}(R)$, and if $\operatorname{ureg}(R)=R$ we say $R$ is a \emph{unit-regular ring}.  Examples include all semisimple rings and all commutative regular rings.  However, there are regular rings which are not unit-regular.

The utility of these definitions is most easily seen by the following classical result.

\begin{lemma}\label{Lemma:RegDirect}
Let $\,_kM$ be a left $k$-module for some ring $k$, and let $a\in R:=\operatorname{End}(M)$.  We have $a\in \operatorname{reg}(R)$ if and only if $\ker(a)$ and ${\rm im}(a)$ are direct summands of $M$.  Moreover, $a\in \operatorname{ureg}(R)$ if and only if $a\in \operatorname{reg}(R)$ and $\ker(a)\isom {\rm coker}(a)$.
\end{lemma}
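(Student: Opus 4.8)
The plan is to reduce everything to statements about module decompositions, using the standard dictionary between endomorphisms and splittings.

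First I would treat the regularity claim. For one direction, suppose $ara = a$ for some $r \in R = \operatorname{End}(M)$. Then $e := ar$ and $f := ra$ are idempotents in $R$, and standard manipulations give $\operatorname{im}(e) = \operatorname{im}(a)$ and $\ker(f) = \ker(a)$. Since the image of an idempotent endomorphism is a direct summand (with complement its kernel), both $\operatorname{im}(a)$ and $\ker(a)$ split off. Conversely, if $M = \ker(a) \oplus C = \operatorname{im}(a) \oplus D$ for submodules $C, D$, then $a$ restricts to an isomorphism $\bar a \colon C \xrightarrow{\sim} \operatorname{im}(a)$; define $r \in R$ to be $\bar a^{-1}$ on $\operatorname{im}(a)$ and $0$ on $D$. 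A direct check on the summands $\ker(a), C$ of $M$ shows $ara = a$.

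Next I would handle the unit-regular refinement. Assume $a \in \operatorname{reg}(R)$, so fix decompositions $M = \ker(a)\oplus C = \operatorname{im}(a)\oplus D$ as above, noting $\operatorname{coker}(a) \isom D$ and that $a|_C \colon C \xrightarrow{\sim} \operatorname{im}(a)$. If in addition $\ker(a) \isom \operatorname{coker}(a)$, pick an isomorphism $\varphi \colon D \xrightarrow{\sim} \ker(a)$ and define $u \in R$ by letting it act as $(a|_C)^{-1}$ on $\operatorname{im}(a)$ and as $\varphi$ on $D$; then $u$ is an isomorphism of $M$ (it maps the summands $\operatorname{im}(a), D$ bijectively onto the summands $C, \ker(a)$), hence a unit of $R$, and the same summand-wise computation as before gives $aua = a$. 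Conversely, if $aua = a$ with $u \in U(R)$, then $u$ carries $\operatorname{im}(a)$ isomorphically onto some complement of $\ker(a)$ — indeed $M = u^{-1}\ker(a)\oplus u^{-1}\operatorname{im}(a)$ and one checks $a$ is injective on $u^{-1}\operatorname{im}(a)$ — and hence $u$ restricts to an isomorphism from any complement $D$ of $\operatorname{im}(a)$ onto $\ker(a)$, so $\operatorname{coker}(a)\isom D\isom\ker(a)$.

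I expect the main subtlety to be bookkeeping rather than depth: one must be careful that the constructed maps $r$ and $u$ are genuinely well-defined $k$-linear endomorphisms of all of $M$ (defining them piecewise on a direct sum decomposition handles this) and that $u$ is bijective (which follows because it sends one direct-sum decomposition of $M$ isomorphically onto another). The only genuinely structural input is the elementary fact that a submodule is a direct summand precisely when it is the image, equivalently the kernel, of an idempotent endomorphism — everything else is diagram-chasing on summands. I would also remark that this lemma is classical and can be cited, e.g.\ from the literature on regular rings, so a full proof may be replaced by a reference.
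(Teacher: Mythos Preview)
Your sketch is correct and actually goes well beyond what the paper does: the paper simply cites Exercises~4.14A$_1$ and~4.14C of Lam's \emph{Exercises in Classical Ring Theory} and Ehrlich's theorem, without reproducing any argument. Your final remark anticipates exactly this.

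Two small points if you intend to flesh this out. First, the paper's convention is that endomorphisms of a left $k$-module act on the \emph{right}, so with $e=ar$ and $f=ra$ the identifications are $\ker(a)=\ker(e)$ and $\operatorname{im}(a)=\operatorname{im}(f)$ rather than the other way round; your formulas are written for the left-action convention, which is fine mathematically but would need adjusting for consistency with the rest of the paper. Second, in the converse direction for unit-regularity, the claim that ``$u$ restricts to an isomorphism from any complement $D$ of $\operatorname{im}(a)$ onto $\ker(a)$'' is slightly imprecise: what you actually get is that $u(D)$ is a complement of $u(\operatorname{im}(a))$, and since $\ker(a)$ is \emph{another} complement of $u(\operatorname{im}(a))$, both are isomorphic to $M/u(\operatorname{im}(a))$ and hence to each other. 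The map $u|_D$ need not land in $\ker(a)$ on the nose. This is a one-line fix and does not affect the conclusion.
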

\begin{proof}
See Exercises 4.14A$_1$ and 4.14C from \cite{LamExercises}, and the comments following them.  The second exercise appears as part of \cite[Theorem 1]{Ehrlich}, with the additional hypothesis that the entire ring $R$ is regular.
\end{proof}

Another generalization of regular rings arose in the work of Warfield \cite{Warfield}, based on earlier work of Crawley and J{\'o}nsson on direct sum decomposition theory \cite{CJ}.  Nicholson \cite{Nicholson77} showed that Warfield's rings are exactly those for which idempotents lift modulo every one-sided ideal, and they are called the \emph{exchange rings}.

Many exchange rings satisfy additional properties, which Nicholson described in \cite{Nicholson77} and \cite{NicholsonStrongClean}.  Following that work, we say that an element $a\in R$ is \emph{clean} if $a=e+u$ for some idempotent $e^2=e$ and some unit $u\in U(R)$.  The additive decomposition $a=e+u$ is sometimes called a \emph{clean decomposition}.  If $a$ has some clean decomposition $a=e+u$ where additionally $e$ and $u$ commute, then we say $a$ is \emph{strongly clean}.  When every element of $R$ is (strongly) clean, then one says that $R$ is a \emph{$($strongly$)$ clean ring}.  Surprisingly, most natural examples of exchange rings are clean rings, including all endomorphism rings of continuous modules \cite{CKLNZ}, all commutative (and more generally abelian) exchange rings \cite[Proposition 1.8]{Nicholson77}, and all strongly $\pi$-regular rings \cite[Theorem 1]{NicholsonStrongClean}. (The last two classes are also strongly clean.)

Bergman has constructed a regular (hence exchange) ring which is not clean; see \cite[p.\ 4746]{CamilloYu} and \cite[Example 3.1]{SterCorner}.  All other examples of non-clean exchange rings seem to be based on this single example.  Remarkably,
unit-regularity of \emph{rings} does imply cleanness by \cite[Theorem 5]{CamilloYu} and \cite[Theorem 1]{CK}.

This raises the question of whether every unit-regular ring is strongly clean, which appears as the fourth of five questions in \cite{NicholsonStrongClean}.  In Section \ref{Section:StrongClean} we answer this question in the negative by constructing a unit-regular ring which is not strongly clean. (Thus, of the five questions from \cite{NicholsonStrongClean}, only Questions 1 and 2 remain open.)  It is also well-known that unit-regular \emph{elements} are not always clean.  We clarify this relationship in Section \ref{Section:Elements} by showing that there is a natural element-wise extension of unit-regularity which is equivalent to an enhanced form of cleanness, proven in Theorem \ref{correspondence}.  One of the conditions of that theorem leads us, in Section \ref{Section:Powers}, to consider powers of regular elements.  We prove that if $a,a^2,\ldots, a^k$ are all regular in a ring $R$, then there exists some $w\in R$ such that $w,w^2,\ldots, w^k$ are corresponding inner inverses.  A similar statement about unit-regular elements also holds.  We finish by proving some natural limitations on these theorems, and providing many examples.

Rings in this paper are associative and have $1$, but are not necessarily commutative.  Modules are unital, and endomorphisms will be written on the side opposite the scalars.  The set of units in a ring $R$ will be denoted (as we did above) by $U(R)$, while the set of idempotents is $\operatorname{idem}(R)$.

\section{Unit-regular rings are not always strongly clean}\label{Section:StrongClean}

This section is devoted to constructing a unit-regular ring which is not strongly clean.  The ring we will create is isomorphic to a complicated example of Bergman found in \cite[Example 5.12]{Goodearl}, given there as an instance of a unit-regular ring with a regular subring which is not unit-regular.  Instead of following the steps taken there to define the ring as a set of certain endomorphisms, we find it easier to view the ring as a set of infinite $\Z\times \Z$ matrices, so we will form the ring along independent lines.

Let $F$ be a field.  Let $F(\!(t)\!)$ denote the field of formal Laurent series over $F$.  We define $R$ to be the set of all $\mathbb{Z}\times\mathbb{Z}$ matrices $A=(a_{i,j})_{i,j\in\mathbb{Z}}$ over $F$ such that there exist $m,n\in\mathbb{Z}$ and $f(t)=\sum_{k=k_0}^\infty a_kt^k\in F(\!(t)\!)$ with the following properties:
\begin{enumerate}[label=(R\arabic*)]
\item \label{frcond1}
if $i\geq m$ or $j<n$ then $a_{i,j}=a_{j-i}$ (where we set $a_k=0$ if $k<k_0$),
\item
\label{frcond}
the submatrix $A_0=(a_{i,j})_{i<m,j\geq n}$ has finite rank (i.e., it has only finitely many linearly independent columns, or equivalently, finitely many linearly independent rows).
\end{enumerate}
Thus, the matrix $A\in R$ in the above definition is of the following form:
\[
\def\arraystretch{1.5}
\urmatrix{ccc|ccc}
{\ddots&\vdots&&&&\\
\ddots&a_{n-m}&\vdots&\multicolumn{3}{c}{A_0}\\
&a_{n-m-1}&a_{n-m}&&&\\
\hline \reflectbox{$\ddots$}&&a_{n-m-1}&a_{n-m}&\cdots&\\
&\reflectbox{$\ddots$}&&a_{n-m-1}&a_{n-m}&\cdots\\
&&\reflectbox{$\ddots$}&&\ddots&\ddots}
{,,m,,}{,,n,,}
{\vdots,\vdots,a_{n-m-1},\reflectbox{$\ddots$},\reflectbox{$\ddots$},\reflectbox{$\ddots$}}
{\ddots,a_{n-m-1},a_{n-m-1},a_{n-m-1},a_{n-m},\ddots}
\]
The indices of the entries $a_{i,j}$ in this matrix increase when we move down or right (respectively). The lower left corner of $A_0$ is at position $(m-1,n)$.

The small letter $m$ on the left of the horizontal line indicates that the row directly below this line is the $m$th row of $A$ (i.e.\ the row $(a_{m,j})_{j\in \Z}$).  Similarly, the small letter $n$ above the vertical line indicates that the column directly to the right of the vertical line is the $n$th column of $A$ (i.e.\ $(a_{i,n})_{i\in \Z}$). In subsequent computations, we will often use these small letters since they will simplify notation.

Note that in the definition of $R$ given above, we may always increase $m$ or decrease $n$ by any finite number, because these changes
do not affect the finite rank condition on $A_0$. Alternatively, the condition \ref{frcond} in the above definition may be replaced by simply saying that $(a_{i,j})_{i<0,j\geq 0}$ has finite rank, or just that $(a_{i,j})_{i<p,j\geq q}$ has finite rank for \emph{some} (arbitrary) $p,q\in\mathbb{Z}$.

There is one more way of characterizing the finite rank condition \ref{frcond}.  Note that if the matrix $A_0=(a_{i,j})_{i<m,j\geq n}$ has finite rank, then there exists $c\geq 1$ such that each row in $A_0$ is a linear combination of the bottom $c$ rows of $A_0$. Writing a decomposition
\[
\def\arraystretch{1.1}\arraycolsep=0.5cm
A_0=\left(\begin{array}{c}A_2\\\hline A_1\end{array}\right),
\]
with $A_1$ composed of the last $c$ rows of $A_0$, this means that we can factor $A_2=YA_1$ for a suitable infinite matrix $Y$ (with $c$ columns). Moreover, since $A_1$ has only finitely many independent columns, we can find $d\geq 1$ such that $A_1$ decomposes as
\[
A_1=\left(\begin{array}{c|c}X_0&X_0Z\end{array}\right)
\]
where $X_0$ contains the first $d$ columns of $A_1$ and $Z$ is an infinite matrix with $d$ rows. Putting this together, we get a decomposition of $A_0$
\begin{equation}\label{finrankdec}
A_0=\left(\begin{array}{cc}YX_0&YX_0Z\\X_0&X_0Z\end{array}\right).
\end{equation}
It is clear that such a decomposition is also sufficient for $A_0$ to have finite rank.

Let us now proceed with our construction. We endow the set $R$ with the usual matrix addition and multiplication. It is clear that addition on $R$ is well-defined. Let us briefly sketch the proof of why multiplication is also well-defined. First, it is clear that if $A,B\in R$ then $AB$ is well-defined as a $\mathbb{Z}\times\mathbb{Z}$ matrix over $F$, because rows of $A$ are bounded from the left and columns of $B$ are bounded from below. Next, if the Laurent series $f(t),g(t)\in F(\!(t)\!)$ correspond to $A$ and $B$ respectively, then a straightforward computation shows that $f(t)g(t)\in F(\!(t)\!)$ corresponds to $AB$ in the sense that condition \ref{frcond1} holds for that Laurent series (for sufficiently large $m$ and sufficiently small $n$).  And finally, to see that $AB$ satisfies the finite rank condition, decompose
\[
\def\arraystretch{1.2}
A=\urmatrix{c|c}{A_{11}&A_{12}\\\hline A_{21}&A_{22}}{0}{0}{A_{12},A_{22}}{A_{21},A_{22}}
\qquad\textnormal{and}\qquad
B=\urmatrix{c|c}{B_{11}&B_{12}\\\hline B_{21}&B_{22}}{0}{0}{B_{12},B_{22}}{B_{21},B_{22}}.
\]
Since $A_{12}$ and $B_{12}$ have finite ranks, $A_{11}B_{12}$ and $A_{12}B_{22}$ also have finite ranks, and hence the upper right corner of $AB$, which is $A_{11}B_{12}+A_{12}B_{22}$, also has finite rank.

By checking the ring axioms directly, or appealing to the fact that they hold in the (over)ring of column finite matrices, this proves $R$ is a ring.  For every $A\in R$, we denote by $\psi(A)\in F(\!(t)\!)$ the Laurent series that corresponds to $A$. Note that $\psi:R\to F(\!(t)\!)$ is a surjective ring homomorphism.

\begin{prop}
The ring $R$ is unit-regular.
\end{prop}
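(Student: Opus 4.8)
The plan is to show directly that every $A \in R$ has a unit inner inverse, working with the $\mathbb Z \times \mathbb Z$ matrix picture rather than invoking any general machinery. The first step is to use the freedom to enlarge $m$ and shrink $n$ so that the matrix $A$ splits into four blocks: a ``constant'' lower-triangular Toeplitz part determined by the Laurent series $f(t) = \psi(A)$ occupying the regions $i \geq m$ and $j < n$, and the finite-rank block $A_0$ in the corner $i < m$, $j \geq n$. I would first treat the case $A = 0$ trivially, and then the pure Toeplitz case (where $A_0 = 0$): here $A$ is essentially multiplication by $f(t)$ on a space resembling $F(\!(t)\!)$, so if $f \neq 0$ the matrix $A$ is already invertible in $R$ (its inverse being multiplication by $f^{-1}$, suitably realized as a $\mathbb Z \times \mathbb Z$ matrix, after checking the finite-rank condition is vacuous there), and if $f = 0$ then $A = 0$. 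So the real content is combining a possibly-zero Toeplitz part with a genuinely finite-rank perturbation in the corner.

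**Next**, I would use the factorization \eqref{finrankdec}, $A_0 = \begin{pmatrix} YX_0 & YX_0 Z \\ X_0 & X_0 Z\end{pmatrix}$, which exhibits $A_0$ as a product through a finite-dimensional space $F^d$. The key idea is that finite-rank operators between the relevant free modules behave like finite matrices, and over a field every matrix is unit-regular; more carefully, one shows that $A$, as an endomorphism of the underlying $F$-vector space $M$ on which $R$ acts, has kernel and image that are direct summands of $M$ \emph{inside the category where $R$ lives}, and that $\ker(a) \cong \operatorname{coker}(a)$, so that Lemma~\ref{Lemma:RegDirect} applies --- but the subtlety is that we need the inner inverse and the relevant complements to again be matrices in $R$, not merely column-finite matrices. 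So rather than appealing to Lemma~\ref{Lemma:RegDirect} abstractly, I would construct the unit inner inverse $B$ by hand: write $A = A_{\mathrm{Toep}} + A_{\mathrm{fin}}$, find an inner inverse for the Toeplitz part (either its honest inverse or, if $f=0$, handle it together with $A_{\mathrm{fin}}$), and then correct by a finite-rank term, using that the whole situation, modulo the ``tail'' directions where $A$ looks like an isomorphism, reduces to a finite $F$-matrix computation where unit-regularity is classical. One must check at each stage that the correction terms satisfy \ref{frcond1} (with series $f^{-1}$ or $0$) and \ref{frcond}.

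**The main obstacle** I anticipate is bookkeeping: verifying that the inverse-of-Toeplitz matrix and the various finite-rank corrections genuinely lie in $R$ --- i.e., that they satisfy both defining conditions with appropriate choices of $m, n$ and Laurent series --- and, most delicately, that when $f = 0$ (so the Toeplitz part vanishes and $A$ is ``mostly zero'') one can still find a \emph{unit} inner inverse rather than just an inner inverse. In that case $A$ itself has finite rank in a strong sense, the space $M$ decomposes as (huge kernel) $\oplus$ (small piece), and one needs a unit of $R$ that is a permutation-type matrix shuffling coordinates to realize the isomorphism $\ker(a) \cong \operatorname{coker}(a)$; producing such a unit explicitly and checking it satisfies \ref{frcond1}–\ref{frcond} (it will be a permutation matrix that is eventually the identity shift, so $f(t)$ for it is a power of $t$ or $1$) is where the care is needed. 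A clean way to organize all of this is to prove a reduction lemma: any $A \in R$ is equivalent, via units of $R$ on the left and right, to a matrix of a very simple normal form (a shift $t^\ell$ plus a finite identity-like block), and unit-regularity is preserved under such equivalence; establishing that normal form is the crux.
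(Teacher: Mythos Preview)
Your proposal has the right ingredients and would eventually succeed, but it is organized less efficiently than the paper's argument, and you have the relative difficulty of the two cases reversed.

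The paper does \emph{not} write $A = A_{\mathrm{Toep}} + A_{\mathrm{fin}}$ additively and then try to ``correct'' an inverse of the Toeplitz part; instead, when $f \neq 0$ it enlarges $m$ so that $m+k_0>n$ and decomposes $A$ as a $3\times 3$ block matrix
\[
A=\begin{pmatrix} S & Y & W\\ 0 & X_0 & Z\\ 0 & 0 & T\end{pmatrix},
\]
where $S$ and $T$ are invertible (upper-triangular Toeplitz with nonzero diagonal) and $X_0$ is a genuine \emph{finite} matrix. One then takes any invertible $U_0$ with $X_0U_0X_0=X_0$ from unit-regularity of $\M_{m+k_0-n}(F)$ and writes down $U$ explicitly as a block-upper-triangular matrix with diagonal $S^{-1},U_0,T^{-1}$. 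This bypasses any perturbation argument or normal-form reduction: the unit inner inverse is exhibited directly and the membership in $R$ is immediate because the only potentially problematic upper-right corner block is visibly finite rank. Your additive scheme would lead you to the same place, but through the detour of showing $I+(\text{finite rank})$ is unit-regular in $R$, which is extra work.

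Conversely, the case $f=0$, which you flag as the delicate one, is in fact the easy case: here $A$ is supported in a finite strip of rows and columns (after using \eqref{finrankdec}), and one simply takes $U$ to be block-diagonal with identities on the infinite corners and an invertible inner inverse $U_0$ on the finite middle block. No shifts or permutation-type units are needed; $\psi(U)=1$. So your anticipated difficulty of manufacturing a unit in the $f=0$ case does not materialize.
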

\begin{proof}
Let $A=(a_{i,j})\in R$. Fix $m,n\in\mathbb{Z}$ and $f(t)=\sum_{k\geq k_0}a_kt^k$ such that $a_{i,j}=a_{j-i}$ if $i\geq m$ or $j<n$.  First, suppose that $f(t)\neq 0$. We may assume that $a_{k_0}\neq 0$ and, after increasing $m$, that $m+k_0>n$. Decompose $A$ as
\[
\def\arraystretch{1.2}
A=\ \urmatrix{c|c|c}{S&Y&W\\\hline 0&X_0&Z\\\hline 0&0&T}{n-k_0,m}{n,m+k_0}{S,X_0,0}{S,X_0,W}.
\]
Observe that the matrices $S$ and $T$ are upper triangular, constant on diagonals, and nonzero on the main diagonal. Therefore, they are invertible. In fact, the inverses $S^{-1}$ and $T^{-1}$ are also upper triangular and constant on diagonals, with entries that are precisely the coefficients of $f(t)^{-1}$ (with the leading term on the main diagonal). Moreover, since any finite matrix ring over a field is unit-regular, there exists an invertible matrix $U_0\in \M_{m+k_0-n}(F)$ such that $X_0U_0X_0=X_0$. Now define
\[
\def\arraystretch{1.2}
U:=\
\urmatrix{c|c|c}{S^{-1}&-S^{-1}YU_0&S^{-1}(YU_0Z-W)T^{-1}\\\hline 0&U_0&-U_0ZT^{-1}\\\hline 0&0&T^{-1}}{n,m+k_0}{n-k_0,m}{-S^{-1}YU_0,-U_0ZT^{-1},T^{-1}}{S^{-1},-S^{-1}YU_0,S^{-1}(YU_0Z-W)T^{-1}}.
\]
Since $U_0$ and $W$ have finite ranks, $S^{-1}(YU_0Z-W)T^{-1}$ has finite rank, hence $U\in R$.  Note that $U$ is invertible, with the inverse
\[
\def\arraystretch{1.2}
U^{-1}=\
\urmatrix{c|c|c}{S&Y&W\\\hline 0&U_0^{-1}&Z\\\hline 0&0&T}{n-k_0,m}{n,m+k_0}{S,U_0^{-1},0}{S,U_0^{-1},W}.
\]
One also easily verifies that $AUA=A$, which completes the proof in this case.

Now suppose that $f(t)=0$. Since $A_0=(a_{i,j})_{i<m,j\geq n}$ has finite rank, we can apply decomposition (\ref{finrankdec}), so that $A$ becomes
\begin{equation}\label{Eq:Psi0}
\def\arraystretch{1.2}
A=\
\urmatrix{c|c|c}{0&YX_0&YX_0Z\\\hline 0&X_0&X_0Z\\\hline 0&0&0}{m-c,m}{n,n+d}{X_0,X_0,0}{0,YX_0,YX_0Z}.
\end{equation}
We may assume that $c=d$ by increasing one or the other as needed. Taking an invertible $U_0\in \M_c(F)$ with $X_0U_0X_0=X_0$, one now easily verifies that
\[
\def\arraystretch{1.2}
U:=\ \urmatrix{c|c|c}{I&0&0\\\hline 0&U_0&0\\\hline 0&0&I}{n,n+c}{m-c,m}{I,U_0,I}{0,U_0,0}
\]
is invertible in $R$, with inverse
\[
\def\arraystretch{1.2}
U^{-1}=\ \urmatrix{c|c|c}{I&0&0\\\hline 0&U_0^{-1}&0\\\hline 0&0&I}{m-c,m}{n,n+c}{I,U_0^{-1},I}{0,U_0^{-1},0},
\]
and that $AUA=A$. This completes the proof.
\end{proof}

\begin{thm}
The ring $R$ is not strongly clean.
\end{thm}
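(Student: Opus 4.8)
The plan is to exhibit one element $a\in R$ that is not strongly clean, using the surjection $\psi\colon R\to F(\!(t)\!)$ in an essential way. First I would set up the following framework. Realize $R$ as a ring of operators on the $F$-vector space $V$ of $\Z$-indexed column vectors with support bounded above (these can be multiplied by matrices in $R$ because the rows of such matrices are bounded on the left), and note that $R=F(\!(t)\!)\oplus\ker\psi$ and that $\ker\psi$ consists exactly of those operators in $R$ which have finite rank over $F$: if $\psi(A)=0$ then $a_{i,j}=0$ for $i\geq m$ or $j<n$ while $A_{0}$ has finite rank, so $A=A_{0}$ has finite rank; conversely if $A\in R$ has finite rank and $\psi(A)=\lambda\neq0$ then $A=\lambda+(A-\lambda)$ would be an invertible operator plus a finite-rank one, hence of infinite rank. (Not every finite-rank operator on $V$ lies in $R$, because of the upper-right support condition \ref{frcond1}; that is what will obstruct strong cleanness.)

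Next I would record two facts for an arbitrary $a\in R$ with a strongly clean decomposition $a=e+u$, $e^{2}=e$, $eu=ue$, $u\in U(R)$. Since $\psi(e)$ is an idempotent of the field $F(\!(t)\!)$ we have $\psi(e)\in\{0,1\}$, so $e\in\ker\psi$ or $1-e\in\ker\psi$; hence $eV$ or $(1-e)V$ is finite-dimensional over $F$. Also $V=eV\oplus(1-e)V$ is an $a$-invariant decomposition, and because $u=a-e$ is a bijection of $V$ commuting with $e$, both $(a-1)|_{eV}$ and $a|_{(1-e)V}$ are bijections; a routine splitting argument then gives $\bigcup_{n}\ker(a^{n})\subseteq eV$, $\bigcup_{n}\ker\bigl((a-1)^{n}\bigr)\subseteq(1-e)V$, and $(1-e)V\subseteq\bigcap_{n}\operatorname{im}(a^{n})$.

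Then I would construct an explicit $a=\lambda+\phi$ with $\lambda\in F(\!(t)\!)$, $\lambda\neq0,1$ (so both $a$ and $a-1$ are an invertible operator plus a finite-rank one, and therefore have equal, finite-dimensional kernel and cokernel), and $\phi$ a low-rank element of $\ker\psi$ — morally, a carefully chosen invertible shift perturbed by a finite-rank correction — having the properties: (P1) $N:=\bigcup_{n}\ker(a^{n})$ is infinite-dimensional over $F$; and (P2) $\ker(a-1)=Fw$ is one-dimensional, $\bigcap_{n}\operatorname{im}(a^{n})=N\oplus Fw$ with $a$ locally nilpotent on $N$, and the space of functionals $\nu\in V^{*}$ with $\nu\circ a=\nu$ (one-dimensional, since it is $(\operatorname{im}(a-1))^{\perp}$ and $\operatorname{coker}(a-1)$ is one-dimensional) is spanned by some $\mu$ normalized by $\mu(w)=1$ whose support is \emph{not} bounded below, so that the rank-one idempotent $w\otimes\mu$ is \emph{not} in $\ker\psi$. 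Granting this, suppose $a=e+u$ is strongly clean. If $\psi(e)=0$ then $eV$ is finite-dimensional, contradicting $N\subseteq eV$ and (P1). If $\psi(e)=1$ then $(1-e)V$ is finite-dimensional, $a$-invariant, $a$ acts bijectively on it, and $(1-e)V\subseteq\bigcap_{n}\operatorname{im}(a^{n})=N\oplus Fw$; since $a$ is locally nilpotent on $N$ and bijective on $(1-e)V$ we get $(1-e)V\cap N=0$, so $\dim(1-e)V\leq1$, and it is nonzero because otherwise $u=a-1$ would be a bijection although $\ker(a-1)=Fw\neq0$. Hence $(1-e)V=Fw$, so $1-e$ is the rank-one idempotent with image $Fw$, necessarily of the form $w\otimes\nu$ with $\nu(w)=1$; commutation with $a$ forces $\nu\circ a=\nu$, so $\nu=\mu$, giving $1-e=w\otimes\mu\notin\ker\psi$ — impossible, since $\psi(e)=1$ forces $1-e\in\ker\psi$. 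Therefore $a$ is not strongly clean, and neither is $R$.

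The main obstacle will be the construction and verification of (P1)--(P2): choosing $\lambda$ and $\phi$ so that $a$ has an infinite Jordan chain at $0$ while its one-dimensional eigenvalue-$1$ data is ``spread out'' enough — with $w$ omitting an infinite set of indices tending to $-\infty$ and the left fixed functional $\mu$ living on precisely those indices — that the only available projection idempotent falls outside the support window permitted by \ref{frcond1}. This reduces to an explicit but delicate computation with the matrix entries of $a$, and is where the real content lies. (Should it turn out that one can also force $\bigcup_{n}\ker\bigl((a-1)^{n}\bigr)$ to be infinite-dimensional, the case $\psi(e)=1$ would become as short as the case $\psi(e)=0$ and most of (P2) would be unneeded; absent that, the plan above only uses the weaker (P2).)
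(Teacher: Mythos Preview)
Your operator-theoretic framework is sound: $\ker\psi$ is indeed the set of finite-rank elements of $R$, and for a strongly clean decomposition $a=e+u$ the Fitting-type containments $\bigcup_n\ker(a^n)\subseteq eV$, $\bigcup_n\ker\bigl((a-1)^n\bigr)\subseteq(1-e)V$, and $(1-e)V\subseteq\bigcap_n\operatorname{im}(a^n)$ all follow as you say. The difficulty is (P2). The equality $\bigcap_n\operatorname{im}(a^n)=N\oplus Fw$ asks that the infinitely-$a$-divisible part of $V$ be only countable-dimensional (since $N=\bigcup_n\ker(a^n)$ is a union of finite-dimensional spaces), yet for $a=\lambda+\phi$ with $\lambda\neq 0$ and $\phi$ of finite rank this intersection is enormous. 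Concretely, for the natural candidate --- the matrix $A$ with $a_{i,i-1}=1$ for $i\neq 0$, $a_{-1,-1}=1$, and zeros elsewhere, so $\psi(A)=t^{-1}$ --- one computes $\operatorname{im}(A^n)=\{v:v_0=\cdots=v_{n-1}=0\}$, hence $\bigcap_n\operatorname{im}(A^n)=\{v\in V:v_j=0\ \forall j\geq 0\}\cong\prod_{j<0}F$, which is uncountable-dimensional; the inclusion $N\oplus Fw\subseteq\bigcap_n\operatorname{im}(A^n)$ is strict by a huge margin. So the deduction that $(1-e)V=Fw$ does not go through, and the (P2) route collapses. I do not see how any finite-rank perturbation of an invertible $\lambda$ can shrink $\bigcap_n\operatorname{im}(a^n)$ to countable dimension.

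Your parenthetical fallback is the correct repair and should be promoted to the main argument: require that $\bigcup_n\ker\bigl((a-1)^n\bigr)$ be infinite-dimensional as well. Then the case $\psi(e)=1$ is handled symmetrically to $\psi(e)=0$ (via $\bigcup_n\ker((a-1)^n)\subseteq(1-e)V$), and all of (P2) becomes unnecessary. The matrix $A$ above satisfies this: a short computation gives $\dim\ker(A^n)=\dim\ker\bigl((A-1)^n\bigr)=n$ for every $n\geq 1$. The paper uses exactly this $A$, but its proof is stylistically quite different from yours: there is no module $V$, no Fitting theory, no functionals. Instead, a two-line entrywise induction shows that any idempotent $E$ commuting with $A$ and having $\psi(E)=0$ must vanish in every row of index $\geq 0$; since row $0$ of $A$ is already zero, $A-E$ then has a zero row and cannot be a unit. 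The case $\psi(E)=1$ is reduced to this by passing to $1-E$ and arguing on columns. Your (repaired) approach yields a more conceptual proof once the element is exhibited; the paper's argument is shorter and entirely self-contained at the level of matrix entries.
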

\begin{proof}
Let
\[
\def\arraystretch{1.2}
A=\ \urmatrix{c|c|c|c}{I&0&0&0\\\hline 0&1&1&0\\\hline 0&0&0&0\\\hline
0&0&0&I}{-1,0,1}{-2,-1,0}{0,0,0,0}{0,0,0,0}\in R.
\]
That is, $A=(a_{i,j})$ where $a_{i,i-1}=1$ for all $i\in\Z\setminus\{0\}$, $a_{-1,-1}=1$, and $a_{i,j}=0$ for all other
pairs $(i,j)$. We will prove that $A$ is not strongly clean in $R$.

Let $E=(e_{i,j})\in R$ be any idempotent such that $AE=EA$.  We have two possible cases, $\psi(E)=0$ or $\psi(E)=1$.  First, suppose that $\psi(E)=0$. We claim that in this case $e_{i,j}=0$ for all $i\geq 0$ and $j\in\Z$. Assume contrapositively that there are some $i\geq 0$ and $j\in\Z$ such that $e_{i,j}\neq 0$. Suppose that $i$ is the largest integer with this property, so that $e_{i',j'}=0$ for all $i'>i$ and $j'\in\Z$.  In this case, the $(i+1,j)$ entry of $EA$ is $0$, while the $(i+1,j)$ entry of $AE$ is $e_{i,j}\neq 0$, and so $EA\ne AE$, proving our claim.  But this means that the $0$th row of $A-E$ is all zeros, and hence $A-E$ is not a unit.

Now suppose that $\psi(E)=1$. Then $E'=(e'_{i,j})=1-E$ satisfies $\psi(E')=0$ and $AE'=E'A$. Similarly as above, we can show that this implies $e'_{i,j}=0$ for all $i\in\Z$ and $j<0$. Accordingly, $A-E=A-1+E'$ has the $(-1)$th column all zeros and hence it cannot be a unit. This proves that $A$ is not strongly clean.
\end{proof}

\begin{remark}
(1) In the proof of this theorem, the only fact we needed about idempotents $E\in \operatorname{idem}(R)$ is that $\psi(E)\in \{0,1\}$.

(2) In the above proof, one could prove $A-E$ is not a unit by merely assuming that $AE=EAE$, rather than $AE=EA$. In fact, if $AE=EAE$ and $\psi(E)=0$, then the same argument as above shows that $E=(e_{i,j})$ must satisfy $e_{i,j}=0$ for all $i\geq 0$ and $j\in\Z$. Hence $A-E$ cannot be a unit as it has a zero $0$th row. Similarly, if $AE=EAE$ and $\psi(E)=1$, then $E'=(e'_{i,j})=1-E$ satisfies $\psi(E')=0$ and $E'A=E'AE'$, from which we obtain $e'_{i,j}=0$ for all $i\in\Z$ and $j<0$. Hence $A-E=A-1+E'$ cannot be a unit as it has a zero $(-1)$th column.  This shows that $R$ is not even \textit{capably clean} in the terminology of \cite{CDN}. In that paper it is shown that one-sided continuous, regular rings are capably clean. In agreement with this fact, one can check directly that the ring $R$ we have defined is not left or right continuous.
\end{remark}

It may be interesting to note that we can view $R$ as a subring of the endomorphism ring of $F(\!(t)\!)$.  Given an element $f(t)=\sum_{k\geq k_0}a_k t^k\in F(\!(t)\!)$, we can identify the Laurent series as an infinite row vector $(a_k)_{k\in \Z}$, which has zero entries for sufficiently negative indices.  The ring $R$ acts on such vectors by right multiplication.  In this way one can identify the ring $R$ we have constructed with the ring $Q$ of endomorphisms given in Example 5.12 from \cite{Goodearl}.  However, we will not give the formal details of this identification, since that example is quite involved, and we don't need this fact.

\section{Element-wise connections between unit-regularity and clean decompositions}\label{Section:Elements}

As proved in \cite{CamilloYu} unit-regular rings are clean rings, and as later clarified in \cite{CK} unit-regularity for rings is equivalent to a strengthened form of cleanness.  These facts cannot be significantly weakened, since there are regular rings which are not clean.  Nor can these statements be significantly strengthened, because the example from the previous section demonstrates that unit-regular rings need not be strongly clean rings.

The purpose of this section is to clarify the connection between unit-regular elements and clean elements, by showing that a ``doubly unit-regular'' condition is equivalent to a strengthened form of cleanness.  This result should come as something of a surprise, since it was shown in \cite{KL} that unit-regular elements are not necessarily clean.  Before we state our main result, we first generalize some results in the literature by characterizing when matrices with a column of zeros are clean.

To begin we recall an easy alternate characterization of when $a\in R$ is clean.

\begin{lemma}[{\cite[Proposition 2]{Zhang}}]\label{Lemma:CleanAlt}
Let $a$ be an element of a ring $R$.  The following are equivalent:
\begin{enumerate}[label={\rm (\arabic*)}]
\item
There exist $e\in \operatorname{idem}(R)$ and $u\in U(R)$ such that $a=e+u$.

\item
There exist $g\in \operatorname{idem}(R)$ and $v\in U(R)$ such that $g=gva$ and $1-g=-(1-g)v(1-a)$.
\end{enumerate}
Moreover, there is a natural bijection between the two conditions.  Given $u,e$, we can take $v=u^{-1}$ and $g=1-u^{-1}eu$; conversely given $v,g$ we can take $u=v^{-1}$ and $e=1-v^{-1}gv$.
\end{lemma}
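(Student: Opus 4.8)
The plan is a direct verification: this is a bookkeeping lemma with no hidden content, and the explicit formulas in the statement already tell us exactly which objects to produce. The one genuinely useful observation is that the pair of relations in condition~(2) can be folded into a single identity, after which the equivalence with~(1) and the asserted bijection both become transparent.

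First I would show that, for an idempotent $g$ and a unit $v$, the conjunction of $g = gva$ and $1-g = -(1-g)v(1-a)$ is equivalent to the single equation $va = 1 + v - gv$. One direction is immediate: left-multiplying $va = 1 + v - gv$ by $g$ and using $g^2 = g$ gives $gva = g$, while expanding $-(1-g)v(1-a) = -(1-g)v + (1-g)va$ and substituting gives $1-g$. Conversely, decomposing $va = gva + (1-g)va$, the relation $g = gva$ handles the first summand, and the relation $1-g = -(1-g)v(1-a)$, rearranged as $(1-g)va = (1-g) + (1-g)v$, handles the second, so $va = g + (1-g) + (1-g)v = 1 + v - gv$.

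Next I would match this with condition~(1) using precisely the stated formulas. If $va = 1 + v - gv$, then left-multiplying by $v^{-1}$ yields $a = v^{-1} + 1 - v^{-1}gv$, so with $u := v^{-1}$ and $e := 1 - v^{-1}gv$ we get $a = e + u$; here $e$ is idempotent because conjugation by a unit preserves idempotents, $(v^{-1}gv)^2 = v^{-1}g^2v = v^{-1}gv$. Conversely, given $a = e + u$ with $e^2 = e$ and $u \in U(R)$, set $v := u^{-1}$ and $g := 1 - u^{-1}eu$ (again idempotent); then $gv = u^{-1} - u^{-1}e$, so $1 + v - gv = 1 + u^{-1}e = 1 + u^{-1}(a-u) = u^{-1}a = va$, which is the consolidated form of~(2). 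Finally, the two passages $(u,e) \mapsto (u^{-1},\, 1 - u^{-1}eu)$ and $(v,g) \mapsto (v^{-1},\, 1 - v^{-1}gv)$ compose to the identity in both orders: $(u^{-1})^{-1} = u$ and $1 - u(1 - u^{-1}eu)u^{-1} = 1 - (1-e) = e$, and symmetrically starting from $(v,g)$.

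Honestly, there is no real obstacle here; the only thing requiring care is not confusing which element is being conjugated by which unit, and the consolidation in the first step keeps that bookkeeping minimal. One could instead verify the four relations underlying (1)~$\Leftrightarrow$~(2) head-on, repeatedly using $e^2 = e$ (or $g^2 = g$) to collapse cross terms, but this is more repetitive than the route above.
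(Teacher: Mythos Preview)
Your verification is correct. The consolidation of the two relations in~(2) into the single identity $va = 1 + v - gv$ is a clean way to organize the computation, and each step checks out: the forward and backward directions of that equivalence, the passage to $a = v^{-1} + (1 - v^{-1}gv)$, and the mutual inverseness of the two maps $(u,e)\leftrightarrow(v,g)$.

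As for comparison with the paper: there is nothing to compare. The paper does not supply its own proof of this lemma; it simply attributes the result to \cite[Proposition~2]{Zhang} and records the explicit correspondence in the statement itself. Your write-up therefore fills in what the paper leaves to the reference.
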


Let $R$ be a ring with an idempotent $e$ and $a\in Re$. In the Peirce decomposition corresponding to $e$ and $f:=1-e$, we can write $a$ as
\begin{equation}\label{matfirstcol}
a=\left(\begin{array}{cc}\alpha&0\\\tau&0\end{array}\right),
\end{equation}
where $\alpha:=ea\in eRe$ and $\tau:=fa\in fRe$.  The following lemma characterizes when such an element $a$ is clean in $R$, in terms of the properties of the ``corner'' elements $\alpha$ and $\tau$.  The lemma generalizes \cite[Proposition 2.2]{SterCorner} which deals with matrices of the form (\ref{matfirstcol}) with the additional property $\tau=0$, and also \cite[Theorem 3.2]{KL} which deals with matrices in $\M_2(K)$ with a zero row (or column) in the case when the base ring $K$ is \emph{commutative}.

\begin{lemma}\label{lemfirstcol}
Let $R$ be a ring, $e\in \operatorname{idem}(R)$, $f:=1-e$, and $a\in Re$.  Put $\alpha:=eae$ and $\tau:=fae$ as above.
The following are equivalent:
\begin{enumerate}[label={\rm (\arabic*)}]
\item \label{colclean}
The element $a$ is clean in $R$.
\item \label{doubleeq}
There exist $\varepsilon\in\operatorname{idem}(eRe)$, $\mu\in U(eRe)$, $\beta\in eRf$, and $\gamma\in fRe$ such that
\[
\varepsilon=\varepsilon\mu\alpha+\varepsilon\beta(\tau+\gamma\alpha) \qquad\textnormal{\textit{and}}\qquad e-\varepsilon=-(e-\varepsilon)\mu(e-\alpha).
\]
\item \label{singleeq}
There exist $\varepsilon\in\operatorname{idem}(eRe)$, $\mu\in U(eRe)$, $\beta\in eRf$, and $\gamma\in fRe$ such that
\[
\alpha=\varepsilon+\mu+(e-\varepsilon)\beta(\tau+\gamma\alpha).
\]
\end{enumerate}
\end{lemma}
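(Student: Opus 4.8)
The plan is to use Lemma~\ref{Lemma:CleanAlt} as a bridge and to work throughout in the Peirce decomposition of $R$ along $e$ and $f:=1-e$, recording $x\in R$ by the array $\left(\begin{smallmatrix}exe & exf\\ fxe & fxf\end{smallmatrix}\right)$. The equivalence \ref{colclean}$\Leftrightarrow$\ref{doubleeq} will be obtained by pushing Zhang's two equations for $a$ down into the corner ring $eRe$, and the equivalence \ref{doubleeq}$\Leftrightarrow$\ref{singleeq} by recognizing the two conditions as the two halves of Lemma~\ref{Lemma:CleanAlt} applied \emph{inside} $eRe$ to a single auxiliary element.

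\textbf{\ref{colclean}$\Rightarrow$\ref{doubleeq}.} If $a$ is clean, Lemma~\ref{Lemma:CleanAlt} gives $g\in\operatorname{idem}(R)$ and $v\in U(R)$ with $g=gva$ and $1-g=-(1-g)v(1-a)$. Since $a=ae$, right-multiplying the first equation by $e$ gives $ge=g$, so $g=\left(\begin{smallmatrix}\varepsilon & 0\\ s & 0\end{smallmatrix}\right)$ with $\varepsilon:=eg\in\operatorname{idem}(eRe)$ and $s=s\varepsilon\in fRe$. Writing $v=\left(\begin{smallmatrix}A & B\\ C & D\end{smallmatrix}\right)$ and expanding the two equations entrywise (note $af=0$) yields, among its Peirce identities, $\varepsilon A\alpha+\varepsilon B\tau=\varepsilon$, $(e-\varepsilon)A(\alpha-e)=e-\varepsilon$, $(e-\varepsilon)B=0$, and $D=sB-f$. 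The crux is to extract from $v$ an honest unit of $eRe$, and this is the one genuine difficulty, since the $e$-corner of a unit of $R$ need not be a unit of $eRe$. It is circumvented as follows: the element $P:=\left(\begin{smallmatrix}e-Bs & B\\ s & -f\end{smallmatrix}\right)=\left(\begin{smallmatrix}e & B\\ 0 & -f\end{smallmatrix}\right)\left(\begin{smallmatrix}e & 0\\ -s & f\end{smallmatrix}\right)$ is a unit of $R$ (each factor is visibly invertible), and—using precisely the identities $(e-\varepsilon)B=0$ and $D=sB-f$—one computes $Pv=\left(\begin{smallmatrix}M & 0\\ N & f\end{smallmatrix}\right)$ with $M:=A+BC-BsA$ and $N:=sA-C$. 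A block lower triangular unit of $R$ whose bottom-right corner is $f$ has block lower triangular inverse, which forces its top-left corner $M$ to be a two-sided unit of $eRe$. One then checks, using only the four identities above, that $\varepsilon$, $\mu:=M$, $\beta:=B$, $\gamma:=N$ satisfy both equations of \ref{doubleeq}; the shape ``$\tau+\gamma\alpha$'' appears because $N\alpha+\tau$ is what emerges in the first Peirce entry of $g=gva$ after $M$ is substituted for $A$.

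\textbf{\ref{singleeq}$\Rightarrow$\ref{colclean}.} Conjugating $a$ by the unit $\left(\begin{smallmatrix}e & 0\\ \gamma & f\end{smallmatrix}\right)$ replaces $\tau$ by $\tau+\gamma\alpha$ and preserves cleanness, so we may assume $\gamma=0$; put $\rho:=(e-\varepsilon)\beta\in eRf$, so that \ref{singleeq} reads $\alpha=\varepsilon+\mu+\rho\tau$ with $\varepsilon\rho=0$. Then $h:=\left(\begin{smallmatrix}\varepsilon & \rho\\ 0 & f\end{smallmatrix}\right)$ is idempotent (using $\varepsilon\rho=0$ and $\rho f=\rho$), and
\[
a-h=\left(\begin{smallmatrix}\mu+\rho\tau & -\rho\\ \tau & -f\end{smallmatrix}\right)=\left(\begin{smallmatrix}e & \rho\\ 0 & f\end{smallmatrix}\right)\left(\begin{smallmatrix}\mu & 0\\ \tau & -f\end{smallmatrix}\right)
\]
is a product of units, the second factor being block lower triangular with invertible diagonal blocks $\mu\in U(eRe)$ and $-f\in U(fRf)$. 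Hence $a=h+(a-h)$ is a clean decomposition.

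\textbf{\ref{doubleeq}$\Leftrightarrow$\ref{singleeq}.} Replacing $\beta$ by $\varepsilon\beta$ changes neither equation of \ref{doubleeq}, so assume $(e-\varepsilon)\beta=0$, and set $\widehat\alpha:=\alpha+\mu^{-1}\beta(\tau+\gamma\alpha)\in eRe$. Using $(e-\varepsilon)\beta=0$, the two equations of \ref{doubleeq} become exactly $\varepsilon=\varepsilon\mu\widehat\alpha$ and $e-\varepsilon=-(e-\varepsilon)\mu(e-\widehat\alpha)$, i.e.\ condition~(2) of Lemma~\ref{Lemma:CleanAlt} in the ring $eRe$ for the element $\widehat\alpha$ with data $\varepsilon,\mu$. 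By that lemma this is equivalent to $\widehat\alpha=\varepsilon'+\mu'$ for an idempotent $\varepsilon'=e-\mu^{-1}\varepsilon\mu$ and unit $\mu'=\mu^{-1}$ of $eRe$; since $\varepsilon'\mu^{-1}\beta=\mu^{-1}(e-\varepsilon)\beta=0$, rearranging gives $\alpha=\varepsilon'+\mu'+(e-\varepsilon')(-\mu^{-1}\beta)(\tau+\gamma\alpha)$, which is \ref{singleeq}. Conversely, \ref{singleeq} says that $\alpha-(e-\varepsilon)\beta(\tau+\gamma\alpha)$ is clean in $eRe$, and running the same argument backwards through Lemma~\ref{Lemma:CleanAlt} recovers \ref{doubleeq}. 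The only step calling for real ingenuity is the corner-unit extraction in the first implication; everything else is a routine, if somewhat lengthy, Peirce computation.
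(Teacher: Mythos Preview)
Your proof is correct, and for \ref{colclean}$\Rightarrow$\ref{doubleeq} it is essentially identical to the paper's: your unit $P$ differs from the paper's $\left(\begin{smallmatrix}e&\beta\\0&f\end{smallmatrix}\right)\left(\begin{smallmatrix}e&0\\-\chi&f\end{smallmatrix}\right)$ only by a sign in the $(2,2)$ corner, and the resulting corner unit $M=A+B(C-sA)$ is exactly the paper's $\mu=\pi+\beta(\sigma-\chi\pi)$ in your notation, with $\gamma=N=sA-C$ matching their $\gamma=\chi\pi-\sigma$. Your \ref{doubleeq}$\Leftrightarrow$\ref{singleeq} via Lemma~\ref{Lemma:CleanAlt} applied inside $eRe$ to $\widehat\alpha$ is a clean conceptual repackaging of the same algebra the paper carries out directly.

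The one place you genuinely diverge is in closing the cycle: the paper proves \ref{doubleeq}$\Rightarrow$\ref{colclean} by writing down an explicit $g$ and a $v$ that is a product of three block matrices, and leaves the verification of $g=gva$, $1-g=-(1-g)v(1-a)$ to the reader. You instead prove \ref{singleeq}$\Rightarrow$\ref{colclean}, first absorbing $\gamma$ into $\tau$ by conjugation (harmless for cleanness), and then exhibiting the idempotent $h=\left(\begin{smallmatrix}\varepsilon&\rho\\0&f\end{smallmatrix}\right)$ and the factorization $a-h=\left(\begin{smallmatrix}e&\rho\\0&f\end{smallmatrix}\right)\left(\begin{smallmatrix}\mu&0\\\tau&-f\end{smallmatrix}\right)$. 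This is a tidier and more transparent construction than the paper's three-factor $v$, and the conjugation trick to eliminate $\gamma$ is a nice touch that the paper does not use.
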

\begin{proof}
\ref{colclean} $\Rightarrow$ \ref{doubleeq}: We imitate the proof of \cite[Proposition 2.2]{SterCorner}. Assume that $a$ is clean. Write $g=gva\quad\textnormal{and}\quad 1-g=-(1-g)v(1-a)$ where $g\in\operatorname{idem}(R)$ and $v\in U(R)$. Since $g\in Ra\subseteq Re$, it must be the case that $g$ has a zero second column (in the $e$-$f$-Peirce decomposition), so that
\[
g=\left(\begin{array}{cc}\varepsilon&0\\\chi&0\end{array}\right)
\]
for some $\varepsilon\in \operatorname{idem}(eRe)$ and $\chi\in fR\varepsilon$. Write $\zeta:=e-\varepsilon$ and set
\[
v=\begin{pmatrix}\pi & \beta\\ \sigma &\delta\end{pmatrix}.
\]
By matrix expansion of the equation $g=gva$ we get the following two equalities:
\begin{eqnarray}
\label{Eq:first} \varepsilon & = &
\varepsilon\pi\alpha+\varepsilon\beta\tau,\\
\label{Eq:second} \chi & = & \chi\pi\alpha+\chi\beta\tau.
\end{eqnarray}
(Note that (\ref{Eq:second}) also follows from (\ref{Eq:first}) since $\chi=\chi\varepsilon$.) Furthermore, from $1-g=-(1-g)v(1-a)$ we get the following four equations:
\begin{eqnarray}
\label{Eq:third} \zeta & = & -\zeta\pi(e-\alpha)+\zeta\beta\tau,\\
\label{Eq:fourth} 0 & = & \zeta\beta,\\
\label{Eq:fifth} -\chi & = &
-(\sigma-\chi\pi)(e-\alpha)+(\delta-\chi\beta)\tau,\\
\label{Eq:sixth} f & = & -\delta+\chi\beta.
\end{eqnarray}
The invertibility of $v$, in conjunction with (\ref{Eq:sixth}) yields that
\[
\left(\begin{array}{cc}e&\beta\\0&f\end{array}\right)
\left(\begin{array}{cc}e&0\\-\chi&f\end{array}\right)v=
\left(\begin{array}{cc}\pi+\beta(\sigma-\chi\pi)&0\\ \sigma-\chi\pi&
-f\end{array}\right)
\]
is invertible. As the lower-right corner is a unit (in the corner ring $fRf$) and the entire matrix is a lower-triangular invertible matrix, we see that $\mu:=\pi+\beta(\sigma-\chi\pi)$ is invertible in $eRe$.

From (\ref{Eq:fourth}) we have $\zeta\pi=\zeta\mu$, so that (\ref{Eq:third}) becomes $\zeta=-\zeta\mu(e-\alpha)$, which is the second of the two equations we need. Moreover, (\ref{Eq:first}) yields $\varepsilon=\varepsilon(\mu-\beta(\sigma-\chi\pi))\alpha+\varepsilon\beta\tau= \varepsilon\mu\alpha+\varepsilon\beta(\tau-(\sigma-\chi\pi)\alpha)$. Taking $\gamma:=-\sigma+\chi\pi$ gives the other needed equation. (It may be interesting to note that we never needed to use (\ref{Eq:fifth}).)

\ref{doubleeq} $\Rightarrow$ \ref{colclean}:
Suppose that $\varepsilon=\varepsilon\mu\alpha+\varepsilon\beta(\tau+\gamma\alpha)$ and $\zeta=-\zeta\mu(e-\alpha)$ for some $\varepsilon=e-\zeta\in\operatorname{idem}(eRe)$, $\mu\in U(eRe)$, $\beta\in eRf$ and $\gamma\in fRe$. Write $\tau':=\tau+\gamma\alpha-\gamma\in fRe$, and let
\[
g:=\left(\begin{array}{cc}\varepsilon&0\\\tau'\varepsilon&0\end{array}\right)
\]
and
\[
v:=\left(\begin{array}{cc}e&0\\\tau'\varepsilon&f\end{array}\right)
\left(\begin{array}{cc}e&-\varepsilon\beta\\0&f\end{array}\right)
\left(\begin{array}{cc}(e+\varepsilon\beta\tau'\zeta)\mu&0\\\tau'\zeta\mu-\gamma&-f\end{array}\right).
\]
Clearly, $g\in\operatorname{idem}(R)$ and $v\in U(R)$, and a straightforward verification shows that $g=gva$ and $1-g=-(1-g)v(1-a)$.  Thus $a$ is clean in $R$ by Lemma \ref{Lemma:CleanAlt}.

\ref{doubleeq} $\Rightarrow$ \ref{singleeq}: The proof of this implication and its converse are similar to the proof of Lemma \ref{Lemma:CleanAlt}, but we include the details for completeness.  Given the two equations of \ref{doubleeq}, adding them together we get
\[
e=\varepsilon\mu\alpha+\varepsilon\beta(\tau+\gamma\alpha)-(e-\varepsilon)\mu(e-\alpha)= \mu\alpha+\varepsilon\beta(\tau+\gamma\alpha)-(e-\varepsilon)\mu,
\]
so that
\[
\alpha=\mu^{-1}(e-\varepsilon)\mu+\mu^{-1}-\mu^{-1}\varepsilon\beta(\tau+\gamma\alpha),
\]
which gives the desired equation taking $\varepsilon':=\mu^{-1}(e-\varepsilon)\mu$, $\mu':=\mu^{-1}$, $\beta':=-\mu^{-1}\beta$, and $\gamma':=\gamma$.

\ref{singleeq} $\Rightarrow$ \ref{doubleeq}: Starting with the equation given in \ref{singleeq}, multiplying on the left by $\mu^{-1}\varepsilon$ gives $\mu^{-1}\varepsilon\alpha=\mu^{-1}\varepsilon+\mu^{-1}\varepsilon\mu$.  Hence
\begin{equation}\label{Eq:lastone}
\mu^{-1}\varepsilon\mu=-(\mu^{-1}\varepsilon\mu)\mu^{-1}(e-\alpha).
\end{equation}
Taking $\varepsilon':=\mu^{-1}(e-\varepsilon)\mu$ and $\mu':=\mu^{-1}$, then (\ref{Eq:lastone}) is exactly the second equation of \ref{doubleeq}. Similarly, multiplying \ref{singleeq} on the left by $\mu^{-1}(e-\varepsilon)$ gives the first equation of \ref{doubleeq}.
\end{proof}

\begin{remark}
(1) If $\tau=0$, then Lemma \ref{lemfirstcol} says that $a=\alpha\in eRe$ is clean in $R$ if and only if
\begin{equation}\label{Eq:equivclean}
\alpha=\varepsilon+\mu +(e-\varepsilon)\beta\gamma\alpha
\end{equation}
for some $\varepsilon\in\operatorname{idem}(eRe)$, $\mu\in U(eRe)$, $\beta\in eRf$, and $\gamma\in fRe$. Thus $a$ is \textit{weakly clean} in $eRe$, following the terminology of \cite[Definition 2.3]{SterCorner}.

Conversely, if $a\in eRe$ is weakly clean in $eRe$ and
\begin{equation}\label{Eq:ExtraAssumption}
e=exfye \text{ for some } x,y\in R,
\end{equation}
then $a$ satisfies (\ref{Eq:equivclean}) and hence is clean in $R$.  However, without assuming any extra condition such as (\ref{Eq:ExtraAssumption}), weakly clean elements of corner rings need not be clean in the entire ring.  This is easy to see by taking $e=1$ and $R$ to be any weakly clean ring which is not clean, such as in \cite[Example 3.1]{SterCorner}.

(2) Lemma \ref{lemfirstcol} provides an easy way to see that regular elements in rings with stable range one are always clean, which was originally proved in \cite[Theorem 3.3]{WCKL}.

To see this, let $R$ have stable range one and write $a=ara\in R$ with $r\in R$. Setting $e:=ra$ we have $a\in Re$, so that $a$ decomposes as in (\ref{matfirstcol}).  Since $e=ere\alpha+erf\tau$ and $eRe$ has stable range one, we have $e=\mu\alpha+\omega erf\tau$ for some $\mu\in U(eRe)$ and $\omega\in eRe$. Thus, $\varepsilon:=e$, $\mu$, $\beta:=\omega erf$, and $\gamma:=0$ satisfy Lemma \ref{lemfirstcol}\ref{doubleeq}, and hence $a$ is clean in $R$.
\end{remark}

We are now prepared to present the main theorem of this section, which demonstrates that certain unit-regular elements, which we call ``doubly unit-regular,'' possess an extended version of the clean property.

As mentioned previously, not all unit-regular elements are clean.  To get around this problem, we assume two instances of unit-regularity; once for the original element, and once for a corner of the element.  However, as it turns out, unit-regularity is only needed for the corner and mere regularity for the original element.

\begin{thm}\label{correspondence}
Let $a$ be an element of a ring $R$. The following are equivalent:
\begin{enumerate}[label={\rm (\arabic*)}]
\item \label{equiv1}
There exists $u\in U(R)$ with $aua=a$, and writing $e:=ua\in\operatorname{idem}(R)$, the element $eae$ is unit-regular in $eRe$.
\item \label{equiv2}
The exists $r\in R$ with $ara=a$, and writing $e:=ra\in\operatorname{idem}(R)$, the element $eae$ is unit-regular in $eRe$.
\item\label{equiv3}
There exist $g\in\operatorname{idem}(R)$ and $v\in U(R)$ such that putting $h:=1-g$ we have $g=gva$, $h=-hv(1-a)$, $hvh=-h$, and $gvhvgvh=-gvh$.
\item \label{equiv4}
There exist $e\in\operatorname{idem}(R)$ and $u\in U(R)$ such that $a=e+u$, $aR\cap eR=(0)$, and $a^2R\cap aeR=(0)$.
\item \label{equiv5}
There exist $e\in\operatorname{idem}(R)$ and $u\in U(R)$ such that $a=e+u$, $aR\cap eR=(0)$, and $a^2R\cap aeaR=(0)$.
\item \label{equiv6}
There exist $e\in \operatorname{idem}(R)$ and $u\in U(R)$ such that $a=e+u$ with $au^{-1}a=a$ and $a^2u^{-2}a^2=a^2$.
\end{enumerate}
\end{thm}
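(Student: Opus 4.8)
The plan is to prove Theorem \ref{correspondence} as a cycle of implications, routing everything through the corner-ring characterization already available in Lemma \ref{lemfirstcol}. The natural order is \ref{equiv1} $\Rightarrow$ \ref{equiv2} $\Rightarrow$ \ref{equiv4} $\Rightarrow$ \ref{equiv5} $\Rightarrow$ \ref{equiv6} $\Rightarrow$ \ref{equiv1}, with \ref{equiv3} spliced in as an equivalent reformulation of \ref{equiv4} (or \ref{equiv2}) via the $g=1-u^{-1}eu$, $v=u^{-1}$ dictionary from Lemma \ref{Lemma:CleanAlt}. The implication \ref{equiv1} $\Rightarrow$ \ref{equiv2} is trivial since a unit inner inverse is an inner inverse. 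The substantive passage is from regularity-plus-unit-regular-corner to an actual clean decomposition with the two intersection conditions, and this is where Lemma \ref{lemfirstcol} does the heavy lifting.

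For \ref{equiv2} $\Rightarrow$ \ref{equiv4}: given $ara=a$ and $e=ra$, we have $a\in Re$ and $a$ decomposes as in \eqref{matfirstcol} with $\alpha=eae$, $\tau=fae$ where $f=1-e$. First note $e=era\cdot a$... more carefully, $e=ra=r(ara)=(rar)a$, and also $e=eae\cdot r'$-type relations show $\alpha=eae$ is already regular in $eRe$ (indeed $\alpha(ere)\alpha=\alpha$ using $ara=a$). By hypothesis $\alpha$ is even unit-regular in $eRe$, so there is $\mu_0\in U(eRe)$ with $\alpha\mu_0\alpha=\alpha$. I would then exhibit an explicit $\varepsilon,\mu,\beta,\gamma$ satisfying Lemma \ref{lemfirstcol}\ref{doubleeq}: the unit-regularity of $\alpha$ should let one take $\varepsilon$ related to $\mu_0\alpha$ (an idempotent in $eRe$) and $\mu$ a unit built from $\mu_0$, with $\beta$ accounting for $\tau$ via the relation $e = \mu_0\alpha + (e-\mu_0\alpha)$ and the fact that $\tau = fara = (fare)\alpha$ already factors through $\alpha$ (so $fae\in fRe\cdot\alpha$). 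This is the key computational step, and it is also where the two intersection conditions $aR\cap eR=(0)$ and $a^2R\cap aeR=(0)$ must be extracted: $aR\cap eR=(0)$ corresponds to the ``weakly clean'' refinement already encoded in the $(e-\varepsilon)\beta(\dots)$ term of Lemma \ref{lemfirstcol}, while the second intersection condition $a^2R\cap aeR=(0)$ is precisely the extra strength coming from unit-regularity (rather than mere regularity) of the corner $\alpha$ — morally it says $\alpha$ has a unit inner inverse, not just an inner inverse, inside $eRe$. I would verify the intersection conditions directly from the clean decomposition $a=e+u$ produced: $u=a-e$, and one checks $aR\cap eR=(0)$ using $e=\varepsilon+\dots$ and the triangular form, and similarly for $a^2$.

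The implications \ref{equiv4} $\Rightarrow$ \ref{equiv5} and \ref{equiv5} $\Rightarrow$ \ref{equiv6}, and \ref{equiv6} $\Rightarrow$ \ref{equiv1} should be comparatively routine module-theoretic manipulations. For \ref{equiv4} $\Rightarrow$ \ref{equiv5}: note $aeaR\subseteq aeR$, so $a^2R\cap aeaR\subseteq a^2R\cap aeR=(0)$; the converse direction used later needs the reverse inclusion on the relevant part, which follows because $aR\cap eR=(0)$ forces $ae$ and $aea$ to generate comparable right ideals modulo things killed by $a$ — here I would use $a=e+u$ to write $ae = a\cdot e$ and observe $ea = a-u$... this needs a short argument. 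For \ref{equiv5}/\ref{equiv4} $\Rightarrow$ \ref{equiv6}: given $a=e+u$, one shows $au^{-1}a=a$ by proving $a(1-u^{-1}a)=0$, i.e. $au^{-1}(u-a)=0$, i.e. $au^{-1}e$... wait, $u-a=-e$ so this is $-au^{-1}e$; showing $au^{-1}e=0$ is equivalent to $ae'=0$ where... actually the cleanest route is: $aR\cap eR=(0)$ together with $au^{-1}u = a$ and $a=e+u$ gives $a = a u^{-1}(a-e) = au^{-1}a - au^{-1}e$, and $au^{-1}e\in aR\cap Re$; one shows $au^{-1}e\in eR$ too using that $e=1-u^{-1}... $ — I would set $g=1-u^{-1}eu$, $v=u^{-1}$ as in Lemma \ref{Lemma:CleanAlt} to convert $aR\cap eR=(0)$ into the clean statement $g=gva$ being ``sharp,'' which yields $au^{-1}a=a$; the second condition $a^2u^{-2}a^2=a^2$ comes out of $a^2R\cap aeaR=(0)$ by the same mechanism applied one dimension up (replacing $a$ by $a$ acting on $a^2R$). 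Finally \ref{equiv6} $\Rightarrow$ \ref{equiv1}: take $u$ from \ref{equiv6}; then $au^{-1}a=a$ gives the regular (indeed unit-regular) inner inverse $u^{-1}$, so $e_0:=u^{-1}a$ is the idempotent of \ref{equiv1}, and $a^2u^{-2}a^2=a^2$ is exactly what is needed to see that $e_0ae_0$ is unit-regular in $e_0Re_0$ — concretely $(e_0ae_0)(u\cdot\text{something}\cdot u)(e_0ae_0)=e_0ae_0$ with the middle factor a unit of $e_0Re_0$, extracted from $a^2u^{-2}a^2=a^2$ by multiplying on left/right by suitable units.

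The main obstacle I anticipate is the bookkeeping in \ref{equiv2} $\Rightarrow$ \ref{equiv4}: translating ``$\alpha=eae$ unit-regular in $eRe$'' into the precise $(\varepsilon,\mu,\beta,\gamma)$ of Lemma \ref{lemfirstcol}\ref{doubleeq} \emph{together with} the two right-ideal intersection conditions requires choosing the right idempotent and unit and then a somewhat delicate verification that the resulting $u=a-e$ satisfies $aR\cap eR=(0)$ and $a^2R\cap aeR=(0)$ rather than just being a clean decomposition. I expect the cleanest approach is to first establish $aR\cap eR=(0) \Leftrightarrow au^{-1}a=a$ (given $a=e+u$) as a standalone lemma-like observation, and similarly $a^2R\cap aeaR=(0)\Leftrightarrow a^2u^{-2}a^2=a^2$, so that \ref{equiv4}, \ref{equiv5}, \ref{equiv6} become almost tautologically linked and the only real work is the single bridge \ref{equiv2} $\Leftrightarrow$ \ref{equiv6}, proved by direct computation with the Peirce decomposition relative to $e=u^{-1}a$.
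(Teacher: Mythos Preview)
Your overall architecture is sound and matches the paper's: a cycle through the six conditions with Lemma~\ref{lemfirstcol} doing the work at the \ref{equiv2}-to-clean step, and the dictionary of Lemma~\ref{Lemma:CleanAlt} linking \ref{equiv3} to \ref{equiv4}. Your observation that, given $a=e+u$, one has $aR\cap eR=(0)\Leftrightarrow au^{-1}a=a$ is correct (since $u^{-1}a$ and $-u^{-1}e$ are then complementary idempotents), and this is essentially how the paper passes from \ref{equiv3} to \ref{equiv4}.

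There is, however, a genuine gap in your \ref{equiv2} $\Rightarrow$ \ref{equiv4}. You propose taking ``$\varepsilon$ related to $\mu_0\alpha$''. That choice makes the second equation of Lemma~\ref{lemfirstcol}\ref{doubleeq} nontrivial and there is no evident $\mu$ satisfying $e-\mu_0\alpha=-(e-\mu_0\alpha)\mu(e-\alpha)$. The paper's key move is the opposite one: take $\varepsilon=e$ (so $\zeta=0$ and the second equation is vacuous). The first equation then reduces to finding $\beta,\gamma$ with $e=\mu\alpha+\beta(\tau+\gamma\alpha)$, and this is obtained from the identity $e=ere\,\alpha+erf\,\tau$ (which holds because $e=ra$) together with unit-regularity of $\alpha$: one gets $e=\mu\alpha+(e-\mu\alpha)erf(\tau-\tau\mu\alpha)$, so $\beta=(e-\mu\alpha)erf$ and $\gamma=-\tau\mu$. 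Without $\varepsilon=e$ you will not be able to extract the intersection conditions cleanly.

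Relatedly, condition \ref{equiv3} is not merely a reformulation to be ``spliced in''. The paper routes \ref{equiv2} $\Rightarrow$ \ref{equiv3} $\Rightarrow$ \ref{equiv4} precisely because the extra relations $hvh=-h$ and $gvhvgvh=-gvh$ are the algebraic content of the two intersection conditions: from $hvh=-h$ one gets that $-ve$ and $va$ are complementary idempotents (hence $aR\cap eR=(0)$), and from $gvhvgvh=-gvh$ one computes that $-v^2ae$ and $v^2a^2$ are orthogonal idempotents (hence $a^2R\cap aeR=(0)$). These relations, in turn, are verified for the explicit $g,v$ coming from Lemma~\ref{lemfirstcol} with $\varepsilon=e$; the verification of $gvhvgvh=-gvh$ boils down to $\beta(\gamma+\tau')\beta=\beta$, which holds exactly because $e-\mu\alpha$ is idempotent, i.e., because $\alpha$ is \emph{unit}-regular rather than merely regular. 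Your proposal does not isolate this mechanism, and so the passage from ``$\alpha$ unit-regular'' to ``$a^2R\cap aeR=(0)$'' is left unexplained.

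Finally, your sketch of \ref{equiv6} $\Rightarrow$ \ref{equiv1} is too vague. With $e':=u^{-1}a$, the nontrivial point is identifying a unit of $e'Re'$: the paper shows $fu\in e'Re'$ (using $ee'=0$ and $au^{-1}e=0$) and that $e'u^{-1}e'$ is its inverse there; then $a^2u^{-2}a^2=a^2$ gives $e'a\cdot e'u^{-1}e'\cdot e'a=e'a$. Your ``$u\cdot\text{something}\cdot u$'' does not locate this corner unit.
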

\begin{proof}
\ref{equiv1} $\Rightarrow$ \ref{equiv2} is a tautology.

\ref{equiv2} $\Rightarrow$ \ref{equiv3}: Put $f:=1-e$. Since $a\in Re$, we can write $a=\left(\begin{smallmatrix}\alpha&0\\\tau&0\end{smallmatrix}\right)$ in the $e$-$f$-Peirce decomposition. As $\alpha$ is unit-regular by hypothesis, we can find $\mu\in U(eRe)$ such that $\alpha\mu\alpha=\alpha$. Now
\[
e=e^3=erae=ereae+erfae=ere\alpha+erf\tau
\]
gives $erf\tau(e-\mu\alpha)=(e-ere\alpha)(e-\mu\alpha)=e-\mu\alpha$, so that
\begin{equation}\label{Eq:newer}
e=\mu\alpha+(e-\mu\alpha)erf\tau(e-\mu\alpha)=\mu\alpha+(e-\mu\alpha)erf(\tau-\tau\mu\alpha).
\end{equation}
Hence $\varepsilon:=e$, $\mu$, $\beta:=(e-\mu\alpha)erf$, and $\gamma:=-\tau\mu$ satisfy the conditions of Lemma \ref{lemfirstcol}\ref{doubleeq}. Accordingly, $a$ is clean in $R$, i.e.\ $g=gva$ and $h=-hv(1-a)$, with $g=1-h\in\operatorname{idem}(R)$ and $v\in U(R)$. Moreover, the proof of the implication \ref{doubleeq} $\Rightarrow$ \ref{colclean} in Lemma \ref{lemfirstcol} tells us that we can take
\[
g:=\left(\begin{array}{cc}e&0\\\tau'&0\end{array}\right) \quad\textnormal{and}\quad
v:=\left(\begin{array}{cc}e&0\\\tau'&f\end{array}\right)
\left(\begin{array}{cc}e&-\beta\\0&f\end{array}\right)
\left(\begin{array}{cc}\mu&0\\-\gamma&-f\end{array}\right)
\]
where $\tau':=\tau+\gamma\alpha-\gamma$ (since $\varepsilon=e$ and hence $\zeta=0$). It is a matter of a routine verification that $g$, $h$, and $v$ satisfy $hvh=-h$, so it only remains to see that $gvhvgvh=-gvh$. By direct computation we get
\[
gvh=
\left(\begin{array}{cc}-\beta\tau'&\beta\\-\tau'\beta\tau'&\tau'\beta\end{array}\right)
\quad\textnormal{and}\quad
gvhvgvh=
\left(\begin{array}{cc}\beta(\gamma+\tau')\beta\tau'&-\beta(\gamma+\tau')\beta\\
\tau'\beta(\gamma+\tau')\beta\tau'&-\tau'\beta(\gamma+\tau')\beta\end{array}\right),
\]
so we only need to see that $\beta(\gamma+\tau')\beta=\beta$. This follows from $\beta=(e-\mu\alpha)\beta$ and
\[
\beta(\gamma+\tau')=\beta(\tau+\gamma\alpha)=(e-\mu\alpha)erf(\tau-\tau\mu\alpha) =e-\mu\alpha,
\]
where the last equality follows from (\ref{Eq:newer}).

\ref{equiv3} $\Rightarrow$ \ref{equiv4}: Assuming \ref{equiv3}, Lemma \ref{Lemma:CleanAlt} gives us an idempotent $e:=v^{-1}hv$ and a unit $u:=v^{-1}$ such that $a=e+u$. From $hvh=-h$ we have $-ve=-hv\in\operatorname{idem}(R)$. The orthogonal complement of this idempotent is precisely $1+ve=1+v(a-v^{-1})=va$. As orthogonal idempotents have disjoint images, it follows that $vaR\cap(-ve)R=(0)$, which is nothing but $aR\cap eR=(0)$.

To prove the remaining equality $a^2R\cap aeR=(0)$, first apply $hv=hvg-h$ and $vh=gvh-h$ to get
\[
v^2ae=v(va)e=v(1+hv)e=(1+vh)ve=(1+vh)hv=(gvh+g)(hvg-h)=gvhvg-gvh
\]
and
\[
v^2a^2=v(va)a=v(1+hv)a=(1+vh)va=(1+vh)(1+hv)=(gvh+g)(hvg+g)=gvhvg+g.
\]
Using $gvhvgvh=-gvh$, it is now a routine verification to see that $-v^2ae$ and $v^2a^2$ are orthogonal idempotents. Therefore $v^2a^2R\cap(-v^2ae)R=(0)$, which readily gives the needed equality.

\ref{equiv4} $\Rightarrow$ \ref{equiv5} is trivial since $aeaR\subseteq aeR$.

\ref{equiv5} $\Rightarrow$ \ref{equiv6}: Assume that \ref{equiv5} holds. Denote $f:=1-e$, then
\[
f(au^{-1}a-a)=fa(u^{-1}a-1)=f(e+u)(u^{-1}a-1)=fu(u^{-1}a-1)=f(a-u)=fe=0.
\]
Hence $au^{-1}a-a\in aR\cap eR=(0)$ and therefore $au^{-1}a=a$. Moreover, from $eu^{-1}a=(a-u)u^{-1}a=0$ (using $aR\cap eR=(0)$) we have $afau^{-2}a=af(e+u)u^{-2}a=afu^{-1}a=au^{-1}a=a$, which gives
\[
a^2u^{-2}a^2-a^2=aeau^{-2}a^2+afau^{-2}a^2-a^2=aeau^{-2}a^2.
\]
Thus, $a^2u^{-2}a^2-a^2\in a^2R\cap aeaR=(0)$, which yields $a^2u^{-2}a^2=a^2$.

\ref{equiv6} $\Rightarrow$ \ref{equiv1}: Write $e':=u^{-1}a$. We will prove that $e'ae'=e'a$ is unit-regular in $e'Re'$. Set $f:=1-e$. From
\[
fue'=fuu^{-1}a=fa=f(e+u)=fu
\]
and
\[
e'fu=u^{-1}afu=u^{-1}(e+u)fu=fu
\]
we have $fu\in e'Re'$. This, together with $ee'=eu^{-1}a=(a-u)u^{-1}a=0$ and $au^{-1}e=au^{-1}(a-u)=0$, gives
\[
fue'u^{-1}e'=fuu^{-1}e'=fe'=e'
\]
and
\[
e'u^{-1}e'fu=e'u^{-1}fu=u^{-1}au^{-1}fu=u^{-1}au^{-1}u=u^{-1}a=e'.
\]
Hence $e'u^{-1}e'$ is a unit in $e'Re'$, with inverse $fu$. We also have
\[
e'ae'u^{-1}e'a=e'au^{-1}e'a=u^{-1}a^2u^{-2}a^2=u^{-1}a^2=e'a
\]
which proves that indeed $e'a\in\operatorname{ureg}(e'Re')$. This completes the proof of the theorem.
\end{proof}

Condition \ref{equiv6} of Theorem \ref{correspondence} is visually left-right symmetric, and so we could replace any of the other conditions by their left-right analogue.  Further, assume $a\in R$ is unit-regular.  Let $r,s$ be any two inner inverses, so $ara=asa=a$.  Putting $e=ra$ and $e'=sa$ we claim that $eae$ is unit-regular in $eRe$ if and only if $e'ae'$ is unit-regular in $e'Re'$, and hence conditions \ref{equiv1} and \ref{equiv2} do not depend on which (unit) inner inverse one chooses.  This can be shown directly, but is also a consequence of the following:

\begin{prop}\label{Prop:Item7}
If $R$ is the endomorphism ring of some left $k$-module $\!_kM$, then the conditions of {\rm Theorem \ref{correspondence}} are also equivalent to:
\begin{enumerate}[label={\rm (\arabic*)}, start=7]
\item \label{equiv7}
The element $a\in \operatorname{End}(M)$ is regular in $\operatorname{End}(M)$, and $a|_{Ma}\in \operatorname{End}(Ma)$ is unit-regular in $\operatorname{End}(Ma)$.
\end{enumerate}
\end{prop}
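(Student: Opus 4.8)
The plan is to prove that condition \ref{equiv7} is equivalent to condition \ref{equiv2} of Theorem~\ref{correspondence}; since Theorem~\ref{correspondence} already shows that \ref{equiv2} is equivalent to \ref{equiv1} and to all the remaining conditions, this suffices. The only ingredient beyond the theorem is the standard corner-ring isomorphism $eRe\isom\operatorname{End}(Me)$, and the heart of the argument is the observation that when $e$ is taken to be $ra$ for an inner inverse $r$ of $a$, this isomorphism carries $eae$ precisely onto the restriction $a|_{Ma}$.

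First I would pin down conventions. Since endomorphisms of the left module $\,_kM$ act on the side opposite the scalars, $\operatorname{im}(a)=Ma$ and composition in $R=\operatorname{End}(M)$ reads left to right. Because $\operatorname{im}(a^2)\subseteq\operatorname{im}(a)$ for every $a$, the submodule $Ma$ is automatically invariant under $a$, so $a|_{Ma}\in\operatorname{End}(Ma)$ is well defined with no hypothesis on $a$ whatsoever; regularity enters only through the two unit-regularity requirements in \ref{equiv7}.

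Next, assuming $a$ is regular, I would fix $r\in R$ with $ara=a$ and set $e:=ra$. Then $e^2=e$ and, crucially, $ae=a(ra)=ara=a$; this last identity gives $Ma\subseteq Me$ (because $ma=(ma)e$), while $Me\subseteq Ma$ is immediate, so $Me=Ma$. Now invoke the corner-ring isomorphism $\Theta\colon eRe\to\operatorname{End}(Me)$ given by $c\mapsto c|_{Me}$: for $y\in Me$ one computes $y(eae)=(ye)(ae)=ya$, using $ye=y$ on $Me$ together with $ae=a$, so $\Theta(eae)=a|_{Ma}$ after identifying $Me=Ma$. Since $\Theta$ is a ring isomorphism it preserves unit-regularity, so $eae$ is unit-regular in $eRe$ if and only if $a|_{Ma}$ is unit-regular in $\operatorname{End}(Ma)$. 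Both directions of the proposition then follow at once: from \ref{equiv2} one reads off \ref{equiv7} by applying this to the very $r$ furnished by \ref{equiv2}, and from \ref{equiv7} one obtains \ref{equiv2} by choosing any $r$ with $ara=a$, forming $e:=ra$, and running $\Theta$ backwards. As a bonus, since the identification $\Theta(eae)=a|_{Ma}$ does not involve $r$ at all, this simultaneously recovers the remark preceding the proposition that conditions \ref{equiv1} and \ref{equiv2} do not depend on which inner inverse is chosen.

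I do not expect a real obstacle here. The one step demanding care is checking that the corner-ring isomorphism sends $eae$ exactly to $a|_{Ma}$ --- and not merely to ``$a$ followed by the projection of $M$ onto $Ma$ along $\ker(e)$'' --- which is forced by the identity $ae=a$, equivalently by $\operatorname{im}(a)\subseteq Me$, making that projection act trivially on $Me$.
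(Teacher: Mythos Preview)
Your proposal is correct and follows essentially the same route as the paper: fix an inner inverse $r$, set $e:=ra$, use $ae=a$ to get $Me=Ma$, and identify $eae\in eRe$ with $a|_{Ma}\in\operatorname{End}(Ma)$ via the standard corner-ring isomorphism. You have simply spelled out in detail the verifications that the paper leaves implicit.
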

\begin{proof}
\ref{equiv2} $\Leftrightarrow$ \ref{equiv7}: Assume $a\in R$ is regular, and let $r\in R$ be any inner inverse.  Putting $e:=ra\in \operatorname{idem}(R)$, we then have $Ma=Me$.  There is a natural identification $\operatorname{End}(Ma)=\operatorname{End}(Me)\isom eRe$.  Under this identification, $a|_{Ma}$ corresponds to $eae$.  Thus $eae$ is unit-regular if and only if $a|_{Ma}$ is as well.
\end{proof}

\begin{remark}
(1) As Theorem \ref{correspondence} is an element-wise statement, in principle one should be able to give precise formulas for some clean decomposition of $a$, using the following two conditions: (A) $ara=a$ for some $r\in R$ and (B) $w\in U(raRra)$ is an inner inverse for the element $(ra)a(ra)$ in the corner ring $raRra$.  Indeed, put $t=w^{-1}\in U(raRra)$.  With some work we obtain the relations
\[
\begin{array}{llll}
(1)\ ara=a, & (2)\ a^2wa=a^2, & (3)\ wawa=wa, & (4)\ tawa=ta\\
(5)\ wra=w, & (6)\ raw=w, & (7)\ tra=t, & (8)\ rat=t,\\
(9)\ wt=ra, & (10)\ tw=ra.
\end{array}
\]
Putting
\[
e := 1-ra+tr+ar^2a-artr-awaw-ra^2r+ra^2w+ar^2a^2r-ar^2a^2w-awar^2a+awar^2a^2w
\]
then a direct computation (which we performed using a computer and only the ten relations above) shows $e^2=e$ and $u:=a-e$ is a unit satisfying condition \ref{equiv6} of Theorem \ref{correspondence}, whose inverse $v$ has 53 monomials in its support when written in the letters $a,r,t,w$!  (It is possible that a different choice for $e$ may lead to a slightly simpler expression for $v$.  However, adjoining an inverse for $r$ does not simplify any of the formulas given here.)

(2) As one may expect, the statements \ref{equiv4} and \ref{equiv5} of Theorem \ref{correspondence} are \emph{not} equivalent in the sense that a fixed idempotent $e$ and a unit $u$ would satisfy the conditions of \ref{equiv4} if and only if they would satisfy the conditions of \ref{equiv5}. For example, taking $a=\left(\begin{smallmatrix}0&0\\1&1\end{smallmatrix}\right)$, $e=\left(\begin{smallmatrix}1&0\\0&0\end{smallmatrix}\right)$ and $u=\left(\begin{smallmatrix}-1&0\\1&1\end{smallmatrix}\right)$ in the $2\times 2$ matrix ring over a field, one easily checks that $a,e,u$ satisfy \ref{equiv5} but $a^2R\cap aeR\ne(0)$.
\end{remark}

A well-known result of Ara says that strongly $\pi$-regular rings have stable range 1.  A key step in the proof is the observation that any regular, nilpotent element of an exchange ring is a unit-regular element (by \cite[Theorem 2]{Ara}, but also see \cite{Khurana}).  One might ask: Is the assumption that $R$ is an exchange ring necessary?  If $a\in \operatorname{reg}(R)$ and $a^2=0$, then Proposition \ref{Prop:Item7} tells us $a$ is doubly unit-regular, and thus \emph{both} unit-regular and clean.

However, to end this section we construct an example of a ring $S$ and a regular element $a\in S$ such that $a^3=0$, but $a$ is not unit-regular in $S$.  Our construction is based on the ring $R=F\langle x,y\ :\ x^2=0\rangle$.  The following lemma lists some properties of this ring.

\begin{lemma}\label{xylemma}
Let $F$ be a field and $R=F\langle x,y\ :\ x^2=0\rangle$. The following hold:
\begin{enumerate}[label={\rm (\arabic*)}]
\item \label{ZD1}
If $ab=0$ for some nonzero $a,b\in R$ then $a\in Rx$ and $b\in xR$. In particular, the set of nilpotent elements of $R$ is precisely $Rx\cap xR=Fx+xRx$.
\item \label{ZD2}
The idempotents of $R$ are trivial, so $\operatorname{idem}(R)=\{0,1\}$.
\item \label{ZD3}
The ring $R$ is directly finite.
\item \label{ZD4}
Units in $R$ are exactly the elements of the form $\mu+a$ where $\mu\in F\setminus\{0\}$ and $a\in Rx\cap xR$. In particular, $U(R)+Fx\subseteq U(R)$, and $1-yx\notin U(R)$.
\end{enumerate}
\end{lemma}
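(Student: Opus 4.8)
The plan is to build everything on a normal form for $R$. By Bergman's Diamond Lemma---the rewriting rule $x^2\to 0$ has only the ambiguity $x\cdot x^2 = x^2\cdot x$, which resolves trivially---$R$ has as $F$-basis the set of \emph{admissible words}: words in $x,y$ with no factor $xx$. Two admissible words multiply by concatenation, the result being $0$ precisely when the first ends in $x$ and the second begins with $x$; consequently $(Rx)(xR)=Rx^2R=(0)$, while in this normal form $Rx$ (resp.\ $xR$) is exactly the span of the admissible words ending (resp.\ beginning) with $x$. Let $\epsilon\colon R\to F$ be the augmentation killing $x$ and $y$, so $\mathfrak m:=\ker\epsilon$ is the span of the admissible words of positive length, $\mathfrak m^n$ is the span of those of length $\geq n$, and $\bigcap_n\mathfrak m^n=(0)$ because every element of $R$ is supported on finitely many words.

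For \ref{ZD1} I would first dispose of the easy parts: $Rx\cap xR=Fx+xRx$ is a normal-form identity, and $(Fx+xRx)^2\subseteq Rx\cdot xR=(0)$, so every element of $Rx\cap xR$ is square-zero. For the zero-divisor statement, grade $R$ by the number of $y$'s; then $R^{[0]}=F[x]/(x^2)$ and each homogeneous component $R^{[n]}$ is finite-dimensional (in fact $\cong (F[x]/(x^2))^{\otimes(n+1)}$ as an $F$-space, with multiplication gluing adjacent tensor slots), which makes the homogeneous case of the statement---$\xi\eta=0$ with $\xi,\eta$ nonzero and homogeneous forces $\xi\in Rx$, $\eta\in xR$---a finite, explicit computation with coefficients. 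The step I expect to be the real obstacle is to deduce the general statement from the homogeneous one, i.e.\ to show that \emph{all} homogeneous components of $a$ lie in $Rx$ (and of $b$ in $xR$), not merely the top and bottom ones that come out of the grading directly; this needs some care. Granting \ref{ZD1}, if $n\neq 0$ is nilpotent and $n^k=0$ with $k\geq 2$ minimal, then $n\cdot n^{k-1}=0=n^{k-1}\cdot n$ with both factors nonzero, whence $n\in Rx\cap xR$; combined with the square-zero remark this identifies the nilpotents with $Rx\cap xR=Fx+xRx$.

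Items \ref{ZD2} and \ref{ZD3} then follow cheaply. For \ref{ZD2}: an idempotent $e$ has $\epsilon(e)\in\{0,1\}$; if $\epsilon(e)=0$ then $e\in\mathfrak m$, so $e=e^n\in\mathfrak m^n$ for every $n$ and $e=0$, while if $\epsilon(e)=1$ the same argument applied to $1-e$ gives $e=1$. (One can instead note $e(e-1)=0$ and apply \ref{ZD1}.) For \ref{ZD3}: from $ab=1$ the element $ba$ is idempotent, hence $ba\in\{0,1\}$ by \ref{ZD2}; and $ba=0$ would give $a=(ab)a=a(ba)=0$, contradicting $ab=1$, so $ba=1$.

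For \ref{ZD4}, one inclusion is immediate: if $a\in Rx\cap xR$ then $a^2=0$, so for $\mu\in F\setminus\{0\}$ the element $\mu+a=\mu(1+\mu^{-1}a)$ is a unit with inverse $\mu^{-1}(1-\mu^{-1}a)$; granting the other inclusion, $U(R)+Fx\subseteq U(R)$ follows because $Fx\subseteq Rx\cap xR$ and $Rx\cap xR$ is a subspace. For the reverse inclusion, given $u\in U(R)$ the augmentation gives $\mu:=\epsilon(u)\in F\setminus\{0\}$, and writing $u=\mu(1+w)$ with $w\in\mathfrak m$ it remains to prove $w^2=0$ (equivalently, by \ref{ZD1}, that $w$ is nilpotent, equivalently $w\in Rx\cap xR$). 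Here I would work in the completion $\widehat R$ of $R$ along $\mathfrak m$ (formal $F$-combinations of admissible words): in $\widehat R$ the element $1+w$ is automatically invertible, with inverse the geometric series $\sum_{i\geq 0}(-1)^iw^i$, so by uniqueness of inverses $1+w\in U(R)$ if and only if this series already has finite support. Analyzing the words occurring in the powers $w^i$---organized, say, by their number of $x$'s---should show that finiteness of the support is only possible when $w^2=0$. Establishing this termination is the crux of \ref{ZD4}; the element $1-yx$ shows that $\epsilon(u)\neq 0$ alone does not suffice, since $(yx)^i$ is a nonzero admissible word for every $i$, so $\sum_i(-1)^i(yx)^i\notin R$ and $1-yx\notin U(R)$.
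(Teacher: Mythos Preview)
Your treatment of \ref{ZD2} and \ref{ZD3} is correct and essentially matches the paper's (the paper takes the zero-divisor route for \ref{ZD2}; your augmentation argument via $\bigcap_n\mathfrak m^n=(0)$ is a clean alternative).

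For \ref{ZD1} you explicitly flag the passage from the homogeneous case to the general one as ``the real obstacle,'' and indeed you have not supplied it. The difficulty is genuine: knowing that the top and bottom $y$-homogeneous components of $a$ lie in $Rx$ does not obviously propagate to the middle components, since the equations $\sum_{i+j=n}a_ib_j=0$ mix components and the homogeneous pieces $a_i$ can themselves be zero-divisors. The paper does not prove \ref{ZD1} at all---it cites \cite[Example~9.3]{CamilloNielsen}---so there is nothing to compare against, but as written your sketch for \ref{ZD1} is incomplete at precisely the point you identify.

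For \ref{ZD4} the gap is more substantial, and here the paper's argument is genuinely different from yours. You pass to the completion and want to show that $\sum_i(-w)^i$ landing in $R$ forces $w^2=0$, but you do not prove this, and the phrase ``finite support'' hides the issue: the powers $w^i$ can interact through cancellation, so it is not enough to know that each $w^i\neq 0$ when $w$ is not nilpotent. The paper avoids the completion entirely. It writes $u=\mu+u_1+u_2+u_3+u_4$ with $u_1\in Rx\cap xR$, $u_2\in Rx\cap yR$, $u_3\in Ry\cap xR$, $u_4\in Ry\cap yR$, and then \emph{feeds \ref{ZD1} back in as the main tool}: from $xu\cdot vx=0$ with both factors nonzero one gets $xu\in Rx$, which forces $u_4=0$ (and symmetrically $v_4=0$). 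A short leading-monomial argument then rules out $u_2,v_2$ (resp.\ $u_3,v_3$) being simultaneously nonzero, and one more line finishes. This is direct and short once \ref{ZD1} is available; your completion route, even if it can be completed, requires a cancellation analysis among the $w^i$ that you have not begun. In summary: your framework is reasonable, but the two steps you yourself label ``the crux'' are exactly the content of the lemma, and neither is carried out.
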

\begin{proof}
\ref{ZD1} is found in \cite[Example 9.3]{CamilloNielsen}.

\ref{ZD2}:  If $e$ is a nontrivial idempotent in $R$ then $e$ is both a left and a right zero divisor, so that $e\in Rx\cap xR$ by \ref{ZD1}.  Hence $e$ is a nilpotent and thus $e=0$, a contradiction.  (Alternatively, this follows by an easy minimal degree argument.)

\ref{ZD3} follows from \ref{ZD2} since $ab=1$ always implies that $ba$ is a nonzero idempotent (when $1\neq 0$).

\ref{ZD4}:  Clearly $(F\setminus\{0\})+(Rx\cap xR)\subseteq U(R)$, so it suffices to prove the other inclusion.  Let $u\in U(R)$. We may write $u=\mu+u_1+u_2+u_3+u_4$ with $\mu\in F$, $u_1\in Rx\cap xR$, $u_2\in Rx\cap yR$, $u_3\in Ry\cap xR$ and $u_4\in Ry\cap yR$.  Clearly, $\mu\ne 0$. We need to prove that $u_2=u_3=u_4=0$.

Let $v:=u^{-1}=\nu+v_1+v_2+v_3+v_4$, with $\nu\in F$, $v_1\in Rx\cap xR$, $v_2\in Rx\cap yR$, $v_3\in Ry\cap xR$, and $v_4\in Ry\cap yR$.  We have $xu\cdot vx=0$ and $xu,vx\ne 0$, so that \ref{ZD1} yields $xu\in Rx$. This gives $xu_4\in Rx$, so that $u_4=0$. Similarly, $v_4=0$.

Suppose that $u_2\neq 0$ and $v_2\neq 0$. Taking any monomial $p$ in $u_2$ of the largest degree, and any monomial $q$ in $v_2$ of the largest degree, we see that the monomial $pq$ cannot cancel with any other monomial in the product $uv$, so that $uv\neq 1$, which is a contradiction.  Thus $u_2\neq 0$ forces $v_2=0$. Similarly, $u_3\neq 0$ forces $v_3=0$. Therefore, if both $u_2,u_3\neq 0$ then $v=\nu+v_1$, which gives $u=v^{-1}=\nu^{-1}-\nu^{-2}v_1$, a contradiction. Hence $u_2=0$ or $u_3=0$; we may assume by symmetry that $u_3=0$.

Finally, suppose that $u_2\neq 0$, so that $v_2=0$. Then $x=xvu=x(\nu+v_1+v_3)(\mu+u_1+u_2)=\nu x(\mu+u_1+u_2)=\mu\nu x+\nu xu_2$. Hence $xu_2\in Fx$, which is again a contradiction. Thus $u_2=0$, which completes the proof.
\end{proof}

\begin{example}\label{Ex:RegNilp}
There exists a ring $S$ and an element $a\in \operatorname{reg}(S)$ with $a^3=0$, but $a\notin \operatorname{ureg}(S)$.
\end{example}
\begin{proof}
Let $F$ be a field, and set $R=F\langle x,y\ :\ x^2=0\rangle$.  Let $I=R(1-yx)$ which is a left ideal in $R$.  We let $S$ be the unital subring of $\M_2(R)$ given by
\[
S=\begin{pmatrix}R & I\\ R & F+I\end{pmatrix}.
\]
The element $A=\left(\begin{smallmatrix}x & 0\\ 1 & 0\end{smallmatrix}\right)$ is regular with inner inverse $\left(\begin{smallmatrix}y & 1-yx\\ 0 & 0\end{smallmatrix}\right)$.  Further $A^3=0$.

Assume, by way of contradiction, that $A$ is unit-regular in $S$, so $A=AUA$ for some unit $U$ of $S$.  As $E=UA$ is an idempotent with a second column of zeros, it is of the form
\[
E=\begin{pmatrix}e & 0\\ te & 0\end{pmatrix}
\]
for some $t\in R$, and some nonzero idempotent $e^2=e\in R$.  The ring $R$ has only trivial idempotents so $e=1$.  Now $V=U^{-1}\left(\begin{smallmatrix}1 & 0\\ t & 1\end{smallmatrix}\right)$ is an invertible matrix in $S$ satisfying
\[
V\begin{pmatrix}1 & 0\\ 0 & 0\end{pmatrix}=U^{-1}\begin{pmatrix}1 & 0\\ t & 1\end{pmatrix}\begin{pmatrix}1 & 0\\ 0 & 0\end{pmatrix}=U^{-1}E=A,
\]
hence the first columns of $V$ and $A$ coincide, so we may write $V=\left(\begin{smallmatrix}x & a\\ 1 & b\end{smallmatrix}\right)$ for some $a\in I$ and $b\in F+I$.

Now, since $V$ is invertible in the larger ring $\M_2(R)$, we see that
\[
V\begin{pmatrix}-b & 1\\ 1 & 0\end{pmatrix}=\begin{pmatrix}a-xb & x\\ 0 & 1\end{pmatrix}
\]
is also invertible, and hence $c:=a-xb\in U(R)$.  On the other hand $xb\in I+Fx$ so that $c\in I+Fx$.  Writing $c=\lambda x+c'(1-yx)$ with $\lambda\in F$ and $c'\in R$, by Lemma \ref{xylemma}\ref{ZD4} we have
\[
c'(1-yx)=c-\lambda x\in U(R)
\]
so that $1-yx\in U(R)$, yielding the needed contradiction.
\end{proof}

\begin{remark}
Another ring containing a nilpotent regular element which is not unit-regular was quite recently found by Ara and O'Meara, and published on the arXiv \cite{AM}.  They work directly with the generic ring $F\langle a,x \ :\ axa=a,xax=x,a^3=0\rangle$.  Their work was published after we had discovered the argument above, and as we believe both examples and methods are interesting in their own right, we continue to include our example here.
\end{remark}

It may be interesting to note that there is a special case which has a positive solution:

\begin{prop}
If $a,b\in R$ are nilpotent and $aba=a$, then $a\in {\rm ureg}(R)$.
\end{prop}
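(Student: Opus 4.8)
The plan is to prove the (a priori stronger) statement that $a$ is \emph{doubly unit-regular}, that is, that $a$ satisfies the equivalent conditions of Theorem~\ref{correspondence}; condition~\ref{equiv1} of that theorem then supplies a unit $u$ with $aua=a$, giving $a\in\operatorname{ureg}(R)$. I would argue by induction on the nilpotency index $n$ of $a$ (so $a^{n}=0$). The base case $n\le 2$ is immediate: if $a^{2}=0$ then $u:=1+b$ is a unit (because $b$ is nilpotent) and $aua=a^{2}+aba=a$. For the inductive step take $r:=b$, so $ara=a$ and $e:=ra=ba$ is an idempotent (indeed $e^{2}=baba=b(aba)=ba=e$). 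Since $(ra)(ba)=r(aba)=ra$ one has $Ra=Re$, and the corner element appearing in Theorem~\ref{correspondence} is $eae=(ba)a(ba)=ba^{2}$ (collapsing the trailing $aba$ to $a$). By condition~\ref{equiv2} of Theorem~\ref{correspondence} it therefore suffices to show that $ba^{2}\in\operatorname{ureg}(eRe)$.

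The reason for choosing $r=b$ is that this corner element inherits nilpotency \emph{with a strictly smaller index}: a short free-word computation using only $aba=a$ gives $(ba^{2})^{k}=ba^{k+1}$ for every $k\ge 1$, so $ba^{2}$ is nilpotent in $eRe$ of index at most $n-1$. Consequently, applying the inductive hypothesis inside the ring $eRe$, it is enough to produce a \emph{nilpotent} inner inverse of $ba^{2}$ in $eRe$. Equivalently — and this is the assertion I would actually try to verify — one wants $\gamma\in eRe$ with $(ba^{2})\gamma(ba^{2})=ba^{2}$ and $\gamma^{N}=0$ for some $N$; the natural candidates are words built from $b$ (for instance $eb^{2}e$, $e(bab)e$, or combinations that also involve $a^{n-1}$), and checking the inner-inverse identity reduces to manipulations of free words governed by the relations $aba=a$, $a^{n}=0$, and $b^{m}=0$.

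The main obstacle is exactly this last step: exhibiting the nilpotent inner inverse of the corner element $ba^{2}$ in $eRe$ (and, should the literal corner prove uncooperative, replacing $b$ by a more convenient inner inverse $r=b+(1-ba)s+t(1-ab)$ of $a$ so that $eae$ becomes tractable while remaining nilpotent of smaller index). As a consistency check on the whole scheme, Lemma~\ref{Lemma:RegDirect} shows that, with $e=ab$ and $f=ba$, the conclusion $a\in\operatorname{ureg}(R)$ is equivalent to the single right-module isomorphism $(1-ab)R\cong(1-ba)R$; since one always has $abR\cong baR$ (realized by left multiplication by $a$ and its restricted inverse $b$), it is precisely the nilpotency of $a$ and $b$ that must force the complementary summands to be isomorphic, and the induction above is the mechanism for extracting that.
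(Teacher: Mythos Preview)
Your proposal is not a proof but a plan with an explicitly unresolved step, and that step is the entire content of the argument. The induction reduces the problem to showing that the corner element $\alpha:=ba^{2}\in eRe$ (with $e=ba$) admits a \emph{nilpotent} inner inverse in $eRe$; without that, the inductive hypothesis cannot be invoked. You acknowledge this as ``the main obstacle'' but none of the suggested candidates work. For instance, with $\gamma=eb^{2}e=bab^{2}ba$ one computes, using only $aba=a$,
\[
\alpha\gamma\alpha \;=\; ba\,(aba)\,b^{2}ba\cdot ba^{2}\;=\;ba^{2}b^{2}\cdot b(aba)a\;=\;ba^{2}b^{3}a^{2},
\]
which has no reason to equal $ba^{2}$; the candidate $\gamma=ebe$ fails similarly, and more generally the relation $aba=a$ only lets you shorten words of the form $\cdots aba\cdots$, so there is no mechanism for collapsing the extra $b$'s that appear. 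Replacing $b$ by $b+(1-ba)s+t(1-ab)$ does not help either, since you would then need the \emph{new} inner inverse to be nilpotent, which is the same difficulty one level up. In short, the inductive scheme trades one existence problem for another of the same shape.

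The paper bypasses all of this with a one-line construction: set
\[
u\;:=\;b+(1+a)^{-1}(1-ab).
\]
Since $(1-ab)a=a-aba=0$ one has $ua=ba$, hence $aua=aba=a$; and rewriting $u=(1+a)^{-1}\bigl[(1+a)b+(1-ab)\bigr]=(1+a)^{-1}(1+b)$ shows $u\in U(R)$ because $a$ and $b$ are nilpotent. So no induction, no corner rings, and no appeal to Theorem~\ref{correspondence} are needed. Your base case $u=1+b$ when $a^{2}=0$ is in fact the special case of this formula, since then $(1+a)^{-1}(1+b)=(1-a)(1+b)=1+b-a-ab$ and $a(1+b-a-ab)a=aba=a$; the paper's insight is that the correction term $(1+a)^{-1}(1-ab)$ makes the same idea work for arbitrary nilpotency index.
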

\begin{proof}
Let $u:=b+(1+a)^{-1}(1-ab)$.  Clearly, $ua=ba$ and so $aua=aba=a$.  We also compute
\[
u=(1+a)^{-1}(1+a)b + (1+a)^{-1}(1-ab) = (1+a)^{-1}(b+ab+1-ab)=(1+a)^{-1}(1+b)\in U(R).\qedhere
\]
\end{proof}

This proposition leaves open the possibility that a nilpotent element with a strongly $\pi$-regular inner inverse is unit-regular; but we were unable to solve that problem.

\section{Powers of regular elements}\label{Section:Powers}

Condition \ref{equiv6} of Theorem \ref{correspondence} is interesting not only because it tells us that doubly unit-regular elements are clean, but also due to its implications regarding the inner inverses of powers of unit-regular elements.  This section is dedicated to revealing even more structure in this regard, with some surprising consequences.  Our first theorem tells us that when powers of an element are von Neumann regular, then those powers \emph{always} have inner inverses which are also powers.  Thus, only an extremely minimal hypothesis is needed to guarantee this ``power inner inverse'' condition.

\begin{thm}\label{Thm:PowerReg}
If $a,a^2,\ldots,a^n\in \operatorname{reg}(R)$ for some $n\geq 1$, then there exists $w\in R$ such that
\begin{equation}\label{powers}
a^iw^ja^j=w^{j-i}a^j\qquad\textnormal{and}\qquad a^jw^ja^i=a^jw^{j-i}, \qquad \text{ for all $1\leq i\leq j\leq n$}.
\end{equation}
In particular, $a^iw^ia^i=a^i$ for all $1\leq i\leq n$.  Moreover, if $a\in \operatorname{ureg}(R)$, then one can take $w\in U(R)$.
\end{thm}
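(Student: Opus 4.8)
The plan is to induct on $n$. The base case $n=1$ is just the hypothesis $a\in\operatorname{reg}(R)$, so fix $n\geq 2$ and suppose we have already produced $w'\in R$ satisfying the relations \eqref{powers} for all $1\leq i\leq j\leq n-1$ (in the unit-regular case, $w'\in U(R)$). The key observation is that these relations make $e':=w'a$ and $f':=aw'$ into idempotents and give a tight bookkeeping on how $a$ shuffles them: from $a^iw^ja^j=w^{j-i}a^j$ one checks that conjugating and multiplying behave as if $a$ were invertible on $a^{n-1}R$. I would first isolate the element $a^{n}$, which is regular by hypothesis, and the element $a^{n-1}$, which is regular by the inductive hypothesis (indeed $a^{n-1}w'^{\,n-1}a^{n-1}=a^{n-1}$). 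The idea is then to repair $w'$ by adding a correction term supported ``at the top level'', i.e.\ of the form $w := w' + (1-f')\,c\,(1-e')$ or a suitable two-sided variant, so that all the lower relations $1\leq i\leq j\leq n-1$ are automatically preserved (since $a^i(1-f')=0$ and $(1-e')a^j=0$ for the relevant ranges by the inductive relations), while the new relations involving $j=n$ can be arranged by choosing $c$.

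The heart of the argument is therefore the new relations with $j=n$: $a^iw^na^n=w^{n-i}a^n$ and $a^nw^na^i=a^nw^{n-i}$ for $1\leq i\leq n$, together with $a^nw^na^n=a^n$. Here is where I would invoke the doubly-unit-regular machinery, or rather its core mechanism from condition \ref{equiv6} of Theorem \ref{correspondence}. Observe that $b:=a^{n-1}$ satisfies $bw'b=b$ with $w'$ already in hand, and $a\cdot b = a^{n}$ is regular, so $a$ restricted to the appropriate corner associated to $b$ is regular. The trick that makes condition \ref{equiv6} work — namely that $au^{-1}a=a$ and $a^2u^{-2}a^2=a^2$ can be obtained simultaneously — is exactly the statement that a single element ($u^{-1}$ there, $w$ here) can serve as an inner inverse at two consecutive powers. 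I would abstract that mechanism: given that $b$ and $ab$ are both regular with $bw'b=b$, one can choose $c$ so that $w=w'+(\text{correction})$ additionally satisfies $a^nwa^n=a^n$ and the ``one-step'' shuffling relations $aw^na^n=w^{n-1}a^n$ and $a^nw^na=a^nw^{n-1}$; the remaining relations for general $i$ then follow formally by iterating the one-step relations against the already-established lower-level relations. Finally, in the unit-regular case, the correction term $(1-f')c(1-e')$ is added to a unit $w'$ with $f'=aw'$, $e'=w'a$; since $(1-f')c(1-e')$ squares to zero against $w'$ in the right way, $w = w'(1 + w'^{-1}(1-f')c(1-e'))$ and $1+w'^{-1}(1-f')c(1-e')$ is unipotent, hence $w\in U(R)$.

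The main obstacle I anticipate is verifying that the lower-level relations \eqref{powers} for $1\leq i\leq j\leq n-1$ are genuinely untouched by the correction term, and simultaneously that the correction can be chosen to fix \emph{all} the $j=n$ relations rather than just a convenient subset. The delicate point is that the relations are not symmetric between left and right multiplication unless one is careful, so the correction likely needs both a left-supported and a right-supported piece, and checking that these two pieces do not interfere with each other (nor destroy idempotency of the relevant $w$-products) is the computation I expect to be fiddly. A clean way to manage this is to pass to the endomorphism-ring picture via Lemma \ref{Lemma:RegDirect} and Proposition \ref{Prop:Item7}: realize $R$ acting on a module $M$, decompose $M$ using the direct-summand structure of $\ker(a^i)$ and $\operatorname{im}(a^i)$ for $1\leq i\leq n$ (a flag of summands, since regularity of each power gives splittings), and build $w$ directly as the map that inverts $a$ across consecutive pieces of the flag and is zero off the flag. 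In that picture \eqref{powers} becomes a bookkeeping identity about the flag, and the unit-regular refinement amounts to choosing the complements compatibly so that $w$ becomes a bijection. That geometric route, I expect, will be both the safest to write and the one that most transparently explains why such a uniform $w$ exists.
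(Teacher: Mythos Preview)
Your inductive scheme has a structural gap that cannot be repaired by tweaking the correction term. First, the vanishing claims you state are on the wrong side: with $e'=w'a$ and $f'=aw'$ one has $a(1-e')=0$ and $(1-f')a=0$, \emph{not} $a^i(1-f')=0$ and $(1-e')a^j=0$. If one swaps the correction to the form $\delta=(1-e')c(1-f')$ so that $a\delta=\delta a=0$, then indeed $w^{j}a^{j}=w'^{\,j}a^{j}$ for $j\le n-1$ (this uses the inductive identities $(1-f')w'^{\,k}a^{k+1}=0$, not merely $(1-f')a=0$). But the very same computation shows that every cross term in the expansion of $a^{n}w^{n}a^{n}$ vanishes as well: in a word $w'^{k_0}\delta\cdots\delta w'^{k_m}$ with $m\ge1$ and $k_0+\cdots+k_m+m=n$, one of $k_0,k_m$ is at most $n-2$, and then either $a^{n}w'^{k_0}(1-e')=0$ or $(1-f')w'^{k_m}a^{n}=0$ by the inductive relations. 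Hence $a^{n}w^{n}a^{n}=a^{n}w'^{\,n}a^{n}$ regardless of $c$, and you have no mechanism to force this to equal $a^{n}$. In short, any correction tame enough to preserve the lower relations is automatically invisible at the top level.

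This is why the paper does \emph{not} correct an existing $w'$. Instead, it fixes a single inner inverse $r$ with $ara=a$, passes to the corner $eRe$ with $e=ra$, observes that $\alpha:=ea$ satisfies $\alpha^{k}=ra^{k+1}$, so that regularity of $a^{2},\dots,a^{n}$ yields regularity of $\alpha,\dots,\alpha^{n-1}$ in $eRe$, and applies the inductive hypothesis \emph{to $\alpha$} (and symmetrically to $\alpha'=ae'$ in $e'Re'$ with $e'=ar$) to obtain auxiliary elements $v,v'$. The element $w$ is then built from scratch as $w=(1+fave)\,r\,(1+e'v'af')$, and the relations \eqref{powers} are verified by a short secondary induction using the identity $wa^{2}=av\alpha$. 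The unit-regular refinement is immediate because $w$ is visibly a product of three units when $r\in U(R)$. Your appeal to condition~\ref{equiv6} of Theorem~\ref{correspondence} is not a substitute for this: that theorem produces a \emph{new} clean decomposition (and hence a new $u$) from scratch, which is exactly the opposite of perturbing a pre-existing $w'$. Your module-theoretic flag sketch at the end is a plausible alternative, but it is a different proof that would need to be carried out; the ``correct $w'$ at the top'' strategy, as stated, does not work.
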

\begin{proof}
We prove the theorem by induction on $n\geq 1$. The case $n=1$ being trivial, let $n\geq 2$ and assume that the theorem holds for integers less than $n$. Assume $a,a^2,\ldots,a^n \in \operatorname{reg}(R)$ and fix $r\in R$ such that $a=ara$. In the case when $a$ is unit-regular, we also assume that $r\in U(R)$.

Write $e:=ra\in \operatorname{idem}(R)$, $f:=1-e$, and $\alpha:=ea\in eRe$.  We have $\alpha^k=ea^k=ra^{k+1}$ for every $k$, so that the regularity of $a^2,a^3,\ldots,a^n$, together with $Ra=Rra$, yields
\[
\alpha^k=ra^{k+1}\in ra^{k+1}Ra^{k+1}=ra^{k+1}Rra^{k+1}=\alpha^kR\alpha^k
\]
for every $1\leq k\leq n-1$. Therefore, by the inductive hypothesis there exists $v\in R$ such that $\alpha^iv^j\alpha^j=v^{j-i}\alpha^j$ for all $1\le i\le j\le n-1$. (We could also assume that $\alpha^jv^j\alpha^i=\alpha^jv^{j-i}$, but we will not need this fact.)

Similarly, let $e':=ar\in \operatorname{idem}(R)$, $f':=1-e'$, and $\alpha':=ae'\in e'Re'$. As above, we see that $\alpha',\alpha'^2,\ldots,\alpha'^{n-1}$ are regular, so that we can find $v'\in R$ satisfying $\alpha'^jv'^j\alpha'^i=\alpha'^jv'^{j-i}$ for all $1\le i\le j\le n-1$.

Now define
\[
w:=(1+fave)r(1+e'v'af').
\]
We will prove that $w$ satisfies the desired properties. First, it is clear that if $a$ is unit-regular then $w$ is a unit since it is a product of three units. It remains to see that \eqref{powers} holds for all $1\le i\le j\le n$. We only need to verify the first of the two equations, since the other one will follow by symmetry (as $w$ is defined in a left-right symmetric way).

Initially, let us prove that
\begin{equation}
\label{onej}
aw^ja^j=w^{j-1}a^j\qquad\textnormal{for all}\quad 1\leq j\leq n.
\end{equation}
If $j=1$ this is clear because $af=f'a=0$ implies the equality
\[
awa=a(1+fave)r(1+e'v'af')a=ara=a.
\]
Proceeding inductively, let $j\geq 2$ and suppose that the statement holds for integers less than $j$. From $fav\alpha=av\alpha-\alpha v\alpha=av\alpha-\alpha$ we get
\begin{equation}
\label{hoplaa}
wa^2=(1+fave)r(1+e'v'af')a^2=(1+fave)ra^2=(1+fave)\alpha=\alpha+fav\alpha=av\alpha.
\end{equation}
This, together with $v\alpha^{j-1}=\alpha^{j-2}v^{j-1}\alpha^{j-1}$, gives
\begin{equation}
\label{importa}
wa^j=wa^2a^{j-2}=av\alpha a^{j-2}=av\alpha^{j-1}=a\alpha^{j-2}v^{j-1}\alpha^{j-1}=a^{j-1}v^{j-1}\alpha^{j-1}.
\end{equation}
Now, applying \eqref{importa} twice and considering $aw^{j-1}a^{j-1}=w^{j-2}a^{j-1}$, we get
\[
aw^ja^j=aw^{j-1}wa^j=aw^{j-1}a^{j-1}v^{j-1}\alpha^{j-1}=w^{j-2}a^{j-1}v^{j-1}\alpha^{j-1}=w^{j-2}wa^j=w^{j-1}a^j.
\]
This completes the proof of \eqref{onej}.

Now let us prove that $a^iw^ja^j=w^{j-i}a^j$ for all $1\leq i\leq j\leq n$.  If $i=1$ this is just \eqref{onej}. Proceeding inductively, let $i\geq 2$ and suppose that the statement holds for all $1\leq i'<j\leq n$ where $i'<i$. Then, applying $aw^ja^j=w^{j-1}a^j$ and $a^{i-1}w^{j-1}a^{j-1}=w^{j-i}a^{j-1}$, we quickly obtain
\[
a^iw^ja^j=a^{i-1}aw^ja^j=a^{i-1}w^{j-1}a^j=w^{j-i}a^{j-1}a=w^{j-i}a^j,
\]
which is what was to be proved. This completes the proof of the theorem.
\end{proof}

\begin{remark}
(1) Given $ax_1a=a$ and $a^2 x_2 a^2=a^2$, the element
\begin{eqnarray*}
w & := & x_1 +ax_2ax_1+x_1ax_2a-x_1a^2x_2ax_1-x_1ax_2a^2x_1 +\\ & & ax_2ax_2a-ax_2ax_2a^2x_1-x_1a^2x_2ax_2a+x_1a^2x_2ax_2a^2x_1
\end{eqnarray*}
satisfies the conditions of the theorem.  We leave the quick check to the interested reader.

(2)  If $a$ is an element of a unit-regular ring $R$ and $r$ is some (invertible) element with $ara=a$, then $a^kr^ka^k=a^k$ can easily fail for every $k\geq 2$. For example, let $R=\M_2(F(x))$, with $F$ a field and $F(x)$ the field of rational functions over $F$.  Put $a=\left(\begin{smallmatrix}1&0\\0&0\end{smallmatrix}\right)$ and $r=\left(\begin{smallmatrix}1&1\\x&x^2\end{smallmatrix}\right)$.  An easy inductive argument shows that
\[
r^k=\begin{pmatrix}s_k(x) & t_k(x)\\ u_k(x) & v_k(x)\end{pmatrix}
\]
with $\deg(s_k)=2k-3$, $\deg(t_k)=2k-2$, $\deg(u_k)=2k-1$, and $\deg(v_k)=2k$, when $k\geq 2$.  Thus, $a^kr^ka^k\neq a^k$ when $k\geq 2$.  A quick check shows, however, that $ara=a$.
\end{remark}

The next corollary is also a consequence of \cite[Lemma 3.2]{HO}.

\begin{cor}
If $a$ is a unit-regular element in a regular ring $R$, then $a^k$ is unit-regular for all $k\geq 1$.
\end{cor}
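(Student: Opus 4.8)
The plan is to deduce the corollary immediately from Theorem \ref{Thm:PowerReg}. Since $R$ is a regular ring, \emph{every} element of $R$ lies in $\operatorname{reg}(R)$; in particular $a, a^2, \ldots, a^k$ are all regular. Combined with the hypothesis that $a\in\operatorname{ureg}(R)$, the ``moreover'' clause of Theorem \ref{Thm:PowerReg} (applied with $n=k$) furnishes a \emph{unit} $w\in U(R)$ satisfying $a^iw^ia^i=a^i$ for all $1\le i\le k$. I would then simply specialize to $i=k$: this gives $a^kw^ka^k=a^k$, and since $w\in U(R)$ we have $w^k\in U(R)$, so $w^k$ is a unit inner inverse for $a^k$. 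Hence $a^k\in\operatorname{ureg}(R)$, and as $k\ge 1$ was arbitrary the claim follows.

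There is essentially no obstacle once Theorem \ref{Thm:PowerReg} is in hand, since the theorem does all the work; the only point worth flagging is that the strength of the statement lies in the \emph{uniformity} of $w$ across all powers $a,a^2,\dots,a^k$. A naive approach that tries to invert each $a^k$ separately produces, for each $k$, some inner inverse of $a^k$, but with no control over whether it is a unit; indeed Remark (2) following the theorem shows that an arbitrary unit inner inverse $r$ of $a$ need not satisfy $a^kr^ka^k=a^k$. Thus the content of the corollary is precisely that a single well-chosen $w$ simultaneously inner-inverts all the powers, which is exactly what Theorem \ref{Thm:PowerReg} guarantees.
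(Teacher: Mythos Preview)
Your argument is correct and matches the paper's intended approach: the corollary is stated immediately after Theorem~\ref{Thm:PowerReg} without a written proof, precisely because it follows at once from the ``moreover'' clause of that theorem applied with $n=k$, exactly as you describe. The paper also notes that the result can alternatively be obtained from \cite[Lemma~3.2]{HO}.
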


To finish this section, we discuss examples showing that Theorem \ref{Thm:PowerReg} cannot be significantly improved.  First, we prove that each of the $n$ different regularity conditions given as hypotheses in Theorem \ref{Thm:PowerReg} is independent of the others.

\begin{example}\label{Ex:RegPowerArb}
Let $I\subseteq \Z_{>0}$ be an arbitrary subset of the positive integers.  There exists a ring $R$ and an element $a\in R$ such that $a^k$ is regular in $R$ if and only if $k\in I$.
\end{example}
\begin{proof}
Let $F$ be a field, and let
\[
R:=F\langle a,\ x_i,\ (i\in I)\ :\ a^ix_i a^i=a^i\rangle.
\]
The relations $a^i x_i a^i=a^i$ (for $i\in I$) form a reduction system, and a quick verification shows that all ambiguities are resolvable in the sense of \cite{Bergman}.  Thus by Theorem 1.2 of that paper, every element of $R$ can be put into a unique reduced form by repeatedly replacing monomials on the left-hand side of the reductions with those on the right-hand side.

Clearly, the element $a^k$ is regular in $R$ if $k\in I$.  So, it suffices to prove that if $a^k$ is regular then $k\in I$.  Hence, suppose there exists some $r\in R$ with $a^k r a^k=a^k$.  As the reductions replace single monomials with single monomials, we may as well assume $r$ is a monomial, and write $r=a^{p_0}x_{i_1}a^{p_1}x_{i_2}\cdots x_{i_{n}}a^{p_n}$ for some integer $n\geq 0$, and some integers $p_0,p_1,\ldots, p_n\geq 0$.  If $n=0$, then $r$ is a power of $a$, and then $a^k ra^k\neq a^k$, so we may assume $n\geq 1$ and is minimal.  We also assume $r$ is in reduced form.

First consider the case when $n>1$.  In order for $a^k r a^k=a^k$ to hold, some reduction is necessary, and so since $r$ is reduced either $a^{k+p_0}x_{i_1}a^{p_1}$ is reducible or $a^{p_{n-1}}x_{i_n}a^{k+p_n}$ is reducible.  Without loss of generality we will assume the first is reducible, and so $i_1\leq \min(k+p_0,p_1)$ and the monomial reduces to $a^{k+p_0+p_1-i_1}$.  Hence $a^k r a^k$ reduces to
\[
a^k (a^{p_0+p_1-i_1}x_{i_2}a^{p_2}x_{i_3}\cdots x_{i_n}a^{p_n})a^k
\]
contradicting the minimality of $n$.  Thus we must have $n=1$, and so $a^{k+p_0}x_{i_1}a^{k+p_1}=a^{k}$.  This happens only if $p_0=p_1=0$ and $k=i_1\in I$, as claimed.
\end{proof}

\begin{remark}
We can modify Example \ref{Ex:RegPowerArb} so that $a^{k}$ is unit-regular for $k\in I$, and not regular for $k\in \Z_{>0}\setminus I$, by adding elements $y_i$ and relations $x_i y_i=y_i x_i=1$ to the definition of $R$.  One then shows that if $r\in R$ is a reduced monomial containing a variable $y_i$, then $a^k r a^k$ (in its reduced form) must still contain the variable $y_i$.  So it suffices to consider only those monomials $r\in R$ consisting of letters $a$ and $x_i$, and then the proof proceeds as above.
\end{remark}

It is natural to ask whether it is the case that when $a^k$ is regular in $R$ for \emph{every} $k\geq 1$, if the ``power inner inverse'' condition can be made to hold for all positive powers simultaneously (rather than up to an arbitrary, but fixed, bound).  Unfortunately, this need not be the case.

\begin{example}\label{Ex:NoUltraPower}
There exists a ring $R$ and an element $a\in R$ all of whose powers are regular in $R$, but there is no element $w\in R$ satisfying $a^{k}w^ka^k=a^k$ for every $k\geq 1$.
\end{example}
\begin{proof}
Let $F$ be a field, and let
\[
R:=F\langle a,\ x_i\ (i\in \Z_{>0})\ :\ a^i x_i a^i=a^i\rangle.
\]
Clearly, all powers of $a$ are regular.  Let $w\in R$ be arbitrary.  As $w$ is a finite sum of monomials, we must have $w\in R_n$ for some sufficiently large $n\geq 1$, where
\[
R_n:=F\langle a,\ x_i\ (1\leq i\leq n)\ :\ a^i x_i a^i=a^i\rangle.
\]
By Example \ref{Ex:RegPowerArb}, we know that $a^{n+1}$ is not regular in $R_n$, and in particular we cannot have the equality $a^{n+1}w^{n+1}a^{n+1}=a^{n+1}$, which proves the claim.
\end{proof}

Even though this latest example shows that the ``power inner inverse'' condition may not always hold for all powers simultaneously, there are some situations where this does happen.

\begin{cor}\label{Cor:powers}
Assume $a,a^2,\ldots, a^{n}\in \operatorname{reg}(R)$ for some $n\geq 1$.  If $\ann_r(a^{n-1})=\ann_r(a^{n})$, then there exists $w\in R$ such that $a^{k}w^ka^k=a^k$ for all $k\geq 1$ $($and in particular, all powers of $a\in R$ are regular$)$.  If in addition $a\in \operatorname{ureg}(R)$, then we can take $w\in U(R)$.
\end{cor}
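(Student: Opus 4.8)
The plan is to leverage Theorem \ref{Thm:PowerReg} together with the annihilator hypothesis to bootstrap regularity of $a^n$ up to all higher powers, and simultaneously to promote the finite-range ``power inner inverse'' $w$ to one that works for all exponents. First I would invoke Theorem \ref{Thm:PowerReg} to obtain $w\in R$ (a unit, if $a\in\operatorname{ureg}(R)$) satisfying the full family of relations \eqref{powers} for $1\le i\le j\le n$; in particular $a^kw^ka^k=a^k$ for $1\le k\le n$, and also the ``upper triangular'' relations $a^iw^ja^j=w^{j-i}a^j$. The key structural input is the equation $a^nw^na^n=a^n$ combined with $a^{n-1}w^{n-1}a^{n-1}=a^{n-1}$ and the mixed relation $aw^na^n=w^{n-1}a^n$ (the case $i=1$, $j=n$ of \eqref{powers}, i.e.\ equation \eqref{onej}).

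The heart of the argument is an induction showing $a^kw^ka^k=a^k$ for every $k\ge 1$, and here the hypothesis $\ann_r(a^{n-1})=\ann_r(a^n)$ enters. Suppose $a^kw^ka^k=a^k$ for some $k\ge n$; I want $a^{k+1}w^{k+1}a^{k+1}=a^{k+1}$. The idea is to observe that $w^{k+1}a^{k+1}$ and $w^ka^k$ are related: from \eqref{onej}-type manipulations (specifically $aw^ja^j=w^{j-1}a^j$, which gives $wa^j = a^{j-1}v^{j-1}\alpha^{j-1}$ in the notation of the proof of Theorem \ref{Thm:PowerReg}, hence $a\cdot w^{k+1}a^{k+1} = w^k a^{k+1}$), one reduces the desired identity to showing that a certain element lies in $\ann_r(a^{k+1})$ given that it lies in $\ann_r(a^k)$ — and since $k\ge n$, the ascending chain of right annihilators stabilizes at $a^{n-1}$ (because $\ann_r(a^{n-1})=\ann_r(a^n)$ forces $\ann_r(a^{j})=\ann_r(a^{n-1})$ for all $j\ge n-1$, by multiplying the stabilization relation by powers of $a$). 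Concretely: $\ann_r(a^{n-1})=\ann_r(a^n)$ implies $\ann_r(a^n)=\ann_r(a^{n+1})$ (if $a^{n+1}z=0$ then $a^{n-1}(a^2 z)=0$ so $a^n(a z)=0$, wait — one does this by $a^n(az)=0 \Rightarrow a^{n-1}(az)\in\ann$ chain), and inductively all higher annihilators agree. Using this, the obstruction to lifting $a^kw^ka^k=a^k$ to exponent $k+1$ collapses.

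The step I expect to be the main obstacle is setting up the precise algebraic identity that converts ``$a^kw^ka^k=a^k$'' into ``$a^{k+1}w^{k+1}a^{k+1}=a^{k+1}$'' using only the relations \eqref{powers} and the annihilator collapse — one needs to isolate the correct element $z$ (something like $z = w^{k+1}a^{k+1} - w^k a^k\cdot(\text{correction})$ or, more cleanly, $z = (w^ka^k - e)a$ for suitable $e$) so that $a^{n-1}z=0$ is visible from the already-established relations while $a^n z = 0$ delivers the inductive step. Once the chain of annihilators is shown to stabilize and this identity is in hand, the induction runs cleanly and the unit-regular refinement is immediate since $w$ was already chosen in $U(R)$ when $a\in\operatorname{ureg}(R)$. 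The parenthetical ``all powers of $a$ are regular'' is then a free consequence of $a^kw^ka^k=a^k$.
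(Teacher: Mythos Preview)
Your overall strategy---invoke Theorem \ref{Thm:PowerReg}, then use the annihilator hypothesis to push past exponent $n$---is the right outline, but you have made the second half much harder than it needs to be, and in doing so you lean on a relation you do not actually have. You want to use $aw^{k+1}a^{k+1}=w^{k}a^{k+1}$ in the inductive step for $k\ge n$, but \eqref{onej} (and more generally \eqref{powers}) is only established for exponents $\le n$; nothing in Theorem \ref{Thm:PowerReg} gives you these identities beyond $n$, and your proposal does not supply an independent argument for them. The annihilator-chain stabilisation you sketch is fine, but by itself it does not produce the missing relation, so the inductive step as written has a genuine gap.

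The paper avoids the induction entirely by extracting one clean consequence of the hypothesis and then computing directly. From $a^n w^n a^n=a^n$ we have $1-w^n a^n\in\ann_r(a^n)=\ann_r(a^{n-1})$, so $a^{n-1}=a^{n-1}w^n a^n$. Now apply \eqref{powers} with $i=n-1$, $j=n$ to the right-hand side to obtain the single identity
\[
wa^n=a^{n-1}.
\]
This is the whole trick. Iterating it gives $w^{k-n+1}a^{k}=a^{n-1}$ for every $k\ge n$, hence $w^{k}a^{k}=w^{n-1}a^{n-1}$, and then
\[
a^{k}w^{k}a^{k}=a^{k}w^{n-1}a^{n-1}=a^{k-n+1}\bigl(a^{n-1}w^{n-1}a^{n-1}\bigr)=a^{k}.
\]
No induction on $k$, no appeal to \eqref{onej} beyond its proven range, and the annihilator hypothesis is used exactly once. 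Your ``main obstacle'' dissolves once you spot that the case $i=n-1$, $j=n$ of \eqref{powers} is precisely what converts the annihilator collapse into the absorbing relation $wa^n=a^{n-1}$.
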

\begin{proof}
Construct $w$ as in Theorem \ref{Thm:PowerReg}, and follow the notation given there.  Since $a^n(1-w^n a^n)=0$, we have $a^{n-1}(1-w^n a^n)=0$ and therefore
\[
a^{n-1}=a^{n-1}w^n a^n=wa^n
\]
by (\ref{powers}) with $i=n-1$ and $j=n$.  Hence, for any $k\geq n$,
\[
a^k w^k a^k =a^k w^{n-1}a^{n-1}=a^{k-n+1}(a^{n-1}w^{n-1}a^{n-1})=a^k.\qedhere
\]
\end{proof}

As a special case of this corollary we have:

\begin{cor}
If $R$ is a regular ring of bounded index of nilpotence $($so, in particular, $R$ is unit-regular$)$, then for every element $a\in R$ there exists $w\in U(R)$ such that $a^kw^ka^k=a^k$ for all $k\geq 1$.
\end{cor}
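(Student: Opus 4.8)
The plan is to derive the statement directly from Corollary \ref{Cor:powers}. Let $m\geq 1$ denote the index of nilpotence of $R$ (so $z^{m}=0$ for every nilpotent $z\in R$), and fix $a\in R$. Since $R$ is regular, $a^{k}\in\operatorname{reg}(R)$ for all $k\geq 1$; and since a regular ring of bounded index of nilpotence is unit-regular (a classical fact, see \cite{Goodearl}), we have $a\in\operatorname{ureg}(R)$. Thus the only hypothesis of Corollary \ref{Cor:powers} that still needs to be produced is an index $n$ with $\ann_r(a^{n-1})=\ann_r(a^{n})$, and I claim that $n=m+1$ works, i.e.\ that $\ann_r(a^{m})=\ann_r(a^{m+1})$ for every $a\in R$. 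Granting this, Corollary \ref{Cor:powers} with $n=m+1$ immediately gives a unit $w\in U(R)$ with $a^{k}w^{k}a^{k}=a^{k}$ for all $k\geq 1$.

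To prove the claim I would argue by contradiction, assuming $\ann_r(a^{m})\subsetneq\ann_r(a^{m+1})$. Using regularity of $R$, choose an inner inverse $s$ of $a^{m+1}$ and put $p:=1-sa^{m+1}$; then $p$ is an idempotent and a routine check gives $pR=\ann_r(a^{m+1})$. The key observation is that $\ann_r(a^{m+1})$ is stable under left multiplication by $a$: if $a^{m+1}y=0$ then $a^{m+1}(ay)=a(a^{m+1}y)=0$. Hence $ap\in pR$, so $p(ap)=ap$, and an easy induction yields $(ap)^{k}=a^{k}p$ for all $k\geq 1$. Now $(ap)^{m+1}=a^{m+1}p=0$ since $p\in\ann_r(a^{m+1})$, so $ap$ is nilpotent; but $(ap)^{m}=a^{m}p\neq 0$, because the strict inclusion lets us pick $y\in\ann_r(a^{m+1})=pR$ with $a^{m}y\neq 0$, and then $(a^{m}p)y=a^{m}(py)=a^{m}y\neq 0$. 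So $ap$ is a nilpotent element of index $m+1>m$, contradicting the bound $m$.

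The part I expect to be routine is the bookkeeping around $p$, namely that $pR=\ann_r(a^{m+1})$ and that $(ap)^{k}=a^{k}p$. The one genuinely essential point is the passage from a strictly growing annihilator chain to a nilpotent of too large an index; this succeeds precisely because, in a \emph{regular} ring, $\ann_r(a^{m+1})$ is at once generated by an idempotent $p$ and invariant under left multiplication by $a$, so the truncation $ap$ carries the full action of $a$ on $\ann_r(a^{m+1})$ --- an action which is visibly nilpotent of index $m+1$ as soon as $\ann_r(a^{m})\ne\ann_r(a^{m+1})$. One could instead run the argument inside $\operatorname{End}(R_{R})\cong R$, using Lemma \ref{Lemma:RegDirect} to realize $\ann_r(a^{m+1})$ as a direct summand and composing left multiplication by $a$ with the projection onto it.
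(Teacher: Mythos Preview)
Your proof is correct and follows exactly the route the paper intends: the paper presents this corollary with no proof, merely flagging it as ``a special case'' of Corollary~\ref{Cor:powers}, so the only content to supply is the verification that the annihilator chain stabilizes by step equal to the nilpotence bound --- and your argument via the idempotent $p=1-sa^{m+1}$ and the nilpotent $ap$ does precisely that. The bookkeeping you flag as routine ($pR=\ann_r(a^{m+1})$, $(ap)^k=a^kp$) is indeed routine, and the contradiction step is clean; there is nothing to add.
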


There is at least one further natural situation where elements satisfy this strong form of the power inner inverse condition, which was pointed out to us by T.Y.\ Lam in personal communication, and has the added benefit of yielding additional information about the powered inner inverses.  We thank him for allowing us to include it here.

\begin{prop}\label{Prop:LamGift}
Suppose $a\in \operatorname{ureg}(R)$.  Writing $a=eu$ with $e\in \operatorname{idem}(R)$ and $u\in U(R)$, suppose further that $eue=ue$. Then for every $k\geq 1$, the element $u^{-k}$ is an inner inverse for $a^k$.
\end{prop}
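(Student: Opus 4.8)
The plan is to convert the hypothesis $eue = ue$ into a single product identity relating $a$ and $u$, extract from it a closed form for the powers $a^k$, and then verify the inner‑inverse relation in one line.

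First I would collect the elementary consequences of the decomposition $a = eu$ (with $e^2 = e$ and $u\in U(R)$). Right‑multiplying $a=eu$ by $u^{-1}$ gives $au^{-1}=e$, and hence $au^{-1}a = ea = e(eu) = eu = a$, so $u^{-1}$ is already an inner inverse of $a$; this settles the case $k=1$, and the identity $au^{-1}a=a$ will be reused at the end. Next I would substitute $e=au^{-1}$ into the hypothesis: its left side becomes $eue = (au^{-1})u(au^{-1}) = a^2u^{-1}$, its right side becomes $ue = uau^{-1}$, so $eue=ue$ is equivalent, after right‑multiplying by the unit $u$, to the identity
\[
a^2 = ua .
\]

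From $a^2=ua$ I would prove by induction on $k\ge 1$ that $a^k = u^{k-1}a$: the cases $k=1,2$ are immediate, and the inductive step is $a^{k+1}=a^k a = u^{k-1}a\cdot a = u^{k-1}a^2 = u^{k-1}(ua) = u^k a$. Finally, combining this closed form with $au^{-1}a=a$,
\[
a^k u^{-k} a^k = (u^{k-1}a)\,u^{-k}\,(u^{k-1}a) = u^{k-1}\bigl(au^{-1}a\bigr) = u^{k-1}a = a^k ,
\]
which is precisely the assertion that $u^{-k}$ is an inner inverse of $a^k$ for every $k\ge 1$.

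I do not expect a genuine obstacle here: the whole argument is forced once one observes that $e=au^{-1}$ rewrites the hypothesis as $a^2=ua$, and the only care required is keeping track of which side a unit is cancelled on. (Unit‑regularity of $a$ is what guarantees that a decomposition $a=eu$ with $e$ idempotent and $u$ a unit exists at all; it is not otherwise used.) A more hands‑on but longer alternative would be to note that $eue=ue$ forces $(1-e)ue=0$, so in the Peirce decomposition along $e$ and $f=1-e$ the unit $u$ is upper triangular; writing $p=eue$, $q=euf$, $s=fuf$ one computes $a^k = \left(\begin{smallmatrix} p^k & p^{k-1}q \\ 0 & 0 \end{smallmatrix}\right)$ and $u^{-k} = \left(\begin{smallmatrix} p^{-k} & * \\ 0 & s^{-k} \end{smallmatrix}\right)$, and block multiplication gives the conclusion directly — but the identity $a^2=ua$ renders this unnecessary.
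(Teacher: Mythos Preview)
Your proof is correct and follows essentially the same route as the paper: both arguments establish $a^k = u^{k-1}a$ (the paper writes $u^{k-1}(eu)$ and peels off one $eue=ue$ at a time inside $(eu)^k$, whereas you first recast the hypothesis as $a^2 = ua$ and then induct) and then verify $a^k u^{-k}a^k = u^{k-1}(au^{-1}a) = u^{k-1}a = a^k$. Your rewriting of $eue=ue$ as $a^2 = ua$ via $e = au^{-1}$ is a nice touch but amounts to the same identity the paper uses.
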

\begin{proof}
Clearly $au^{-1}a=euu^{-1}eu=e^2u=eu=a$, establishing the $k=1$ case.  Now fixing $k\geq 2$, we recursively compute
\[
a^k=(eu)^k=(eueu)(eu)^{k-2}=ueu(eu)^{k-2}=u(eu)^{k-1}=\cdots =u^{k-1}(eu).
\]
Thus $a^k u^{-k}a^{k}=u^{k-1}(eu)u^{-k}u^{k-1}(eu)=u^{k-1}(eu)=a^{k}$, as needed.
\end{proof}

\begin{remark}
Note that the elements of Proposition \ref{Prop:LamGift} also satisfy the conditions of Corollary \ref{Cor:powers}, since it happens that ${\rm ann}_r(a^k)=(1-u^{-k}a^k)R=(1-u^{-k}u^{k-1}eu)R=(1-u^{-1}eu)R$ holds for all $k\geq 1$.
\end{remark}

We finish this section with two examples showing that the power inner inverse condition for unit-regular elements does \emph{not} imply cleanness, even if we add the extremely restrictive condition that we are working in a regular ring!  So, in particular, unit-regular elements of exchange rings need not be clean.

\begin{example}\label{Example:RegPowerNotClean}
There exists a regular ring $S$, an element $a\in S$, and a unit $w\in U(S)$, such that $a^kw^ka^k=a^k$ for all $k\geq 1$, but $a$ is not clean in $S$.
\end{example}
\begin{proof}
We use an example of Bergman which can be found in \cite[Theorem 6.11]{DDN}.  Let $F$ be a field and put
\begin{align*}
R=\big\{A=(a_{i,j})\ (i,j\in \Z_{>0})\ : &\ \textnormal{ there exist $n\geq 0$ and $\psi(A)=\textstyle\sum_{k\geq k_0}a_kt^k\in F(\!(t)\!)$}\\
& \ \textnormal{ such that $a_{i,j}=a_{j-i}$ for all $i>n$ and $j\geq 1$}\big\}.
\end{align*}
These are the $\Z_{>0}\times\Z_{>0}$ column-finite matrices over $F$ which have constant diagonals outside a finite set of rows, and thus this is just the ring $R$ from \cite[Example 3.1]{SterCorner} defined in another way.  As in \cite{SterCorner}, note that the set map $\psi$ yields a homomorphism $\psi:R\to F(\!(t)\!)$.  Put $I=\ker(\psi)$, so that
\[
I=\{A=(a_{i,j})\in R\ :\ \textnormal{ there exists $n\geq 0$ such that $a_{i,j}=0$ for all $i>n$ and $j\geq 1$}\}
\]
is the set of matrices which are zero outside a finite set of rows.  From \cite{SterCorner} we know that $R$ is a regular ring which is not clean (in fact, the matrices $A\in R$ with $\psi(A)\notin F\llbracket t\rrbracket$ are not clean in $R$).

Let $S$ be the subring of $\M_2(R)$ defined as
\[
S=\left(\begin{array}{cc}R&I\\I&R\end{array}\right).
\]
Note that $S$ is regular by \cite[Lemma 1.3]{Goodearl} since it has an ideal $J=\M_2(I)$ such that $J$ and $S/J\cong F(\!(t)\!)\times F(\!(t)\!)$ are both regular. Let $\alpha\in R$ denote the right shift operator, i.e.\ $\alpha=(a_{i,j})$ where $a_{i,j}=1$ if $i-j=1$ and $a_{i,j}=0$ otherwise. Set
\[
a=\left(\begin{array}{cc}\alpha&0\\0&0\end{array}\right)\in S.
\]
The element $a$ is not clean in $S$. Indeed, suppose to the contrary that $a=e+u$ for some $e\in \operatorname{idem}(S)$ and $u\in U(S)$.  The homomorphism $\psi:R\to F(\!(t)\!)$ naturally induces a homomorphism $\psi: S\to F(\!(t)\!)\times F(\!(t)\!)$.  Write $\psi(e)=(\varepsilon_1,\varepsilon_2)$ and $\psi(u)=(\mu_1,\mu_2)$.  Note that
$\varepsilon_1,\varepsilon_2\in\{0,1\}$ as $e$ is an idempotent, and also note $\mu_1,\mu_2\neq 0$ as $u$ is a unit.  From $a=e+u$ we have
\[
(t^{-1},0)=\psi(a)=\psi(e)+\psi(u)=(\varepsilon_1,\varepsilon_2)+(\mu_1,\mu_2),
\]
so that $\mu_1=t^{-1}-\varepsilon_1$ and $\mu_2=-\varepsilon_2$.  Hence $\varepsilon_2=1$, so that either $\psi(u)=(t^{-1},-1)$ or $\psi(u)=(t^{-1}-1,-1)$.  Thinking of $S$ as consisting of infinite matrices over $F$, this means that $u$ looks like $\operatorname{diag}(\alpha,-I)$ or $\operatorname{diag}(\alpha-I,-I)$ outside of finitely many rows. A quick check shows that in none of these cases can $u$ be a surjective endomorphism of $(\bigoplus_{i\geq 1}F)^2$, and hence cannot be a unit in $S$.  This proves that $a$ is not clean in $S$.

In order to construct the inner inverse $w\in U(S)$ of $a$, denote by $\alpha'\in R$ the left shift operator, and let $\sigma=(s_{i,j})\in I$ be the matrix with $s_{i,j}=1$ if $i=j=1$ and $s_{i,j}=0$ otherwise. Set
\[
w:=\left(\begin{array}{cc}\alpha'&0\\\sigma&\alpha\end{array}\right)\in S
\quad\textnormal{and}\quad
w':=\left(\begin{array}{cc}\alpha&\sigma\\0&\alpha'\end{array}\right)\in S.
\]
Considering $\alpha'\alpha=1$ and $\alpha\alpha'=1-\sigma$, we easily check that $ww'=w'w=1$, so that $w$ is a unit in $S$. Moreover, an easy verification shows that $w^ka^k=\left(\begin{smallmatrix}1&0\\ 0&0\end{smallmatrix}\right)$ for all $k\geq 1$, which also gives $a^kw^ka^k=a^k$.
\end{proof}

This example still leaves open the possibility that by forcing certain algebraic expressions to be unit-regular, the element might become clean.  For instance, if $a,1-a\in \operatorname{ureg}(R)$, is $a$ clean in $R$?  The answer to this question is no in general.  We can prove this by generalizing the previous example.

\begin{example}
For any field $F$, there exists a regular $F$-algebra $S$ and an element $a\in S$ such that for every $p(x)\in F[x]$ it happens that $p(a)\in \operatorname{ureg}(S)$, but $a$ is not clean in $S$.
\end{example}
\begin{proof}
We assume $F$ is algebraically closed, by replacing it with its algebraic closure if necessary.  Define the ring $R$ and the ideal $I$ just as in the proof of Example \ref{Example:RegPowerNotClean}.  Similarly, let $\alpha\in R$ denote the right shift operator, $\alpha'\in R$ the left shift operator, and $\sigma\in R$ the matrix with $1$ in the upper-left corner and zeros elsewhere (just as in the previous example).  Let $\Lambda=F\cup\{\diamond\}$ and let $S$ be the set of all $\Lambda\times \Lambda$ matrices $A=(a_{i,j})_{i,j\in \Lambda}$ over $R$ with only finitely many nonzero off-diagonal entries such that all off-diagonal entries belong to $I$.

The set $S$ is a ring under the usual matrix addition and multiplication.  Further $S$ is regular because it has the ideal
\[
J=\{A=(a_{i,j})\in S\ :\ \forall\ i,j\in \Lambda,\ a_{i,j}\in I\}
\]
such that $J$ and $S/J\isom \prod_{\lambda\in \Lambda}F(\!(t)\!)$ are both regular.  (Elements of $J$ look like finite matrices over $I$, and since $\M_n(R)$ is regular, so are the elements of $J$.)

Let $a\in S$ be the diagonal matrix with $\alpha$ in the $(\diamond,\diamond)$ coordinate, and $\lambda$ in the $(\lambda,\lambda)$ coordinate for each $\lambda\in F$.  Our first order of business will be to prove that $a$ is not clean in $S$.  Suppose to the contrary that $a=e+u$ for some $e\in \operatorname{idem}(S)$ and $u\in U(S)$.  As $e$, $u$ and $u^{-1}$ all have only finitely many nonzero off-diagonal entries, this means
\[
a_0={\rm diag}(\alpha,\lambda_2,\lambda_3,\ldots,\lambda_n)=e_{0}+u_{0}
\]
for some $n\geq 2$ and some distinct elements $\lambda_2,\ldots, \lambda_n\in F$, with $e_0\in \operatorname{idem}(S_n)$ and $u_0\in U(S_n)$, where $S_n$ denotes the subring of $\M_n(R)$ with all off-diagonal entries lying in $I$.

The homomorphism $\psi:R\to F(\!(t)\!)$ naturally induces a homomorphism $\psi: S_n\to F(\!(t)\!)^n$.  Write $\psi(e_0)=(\varepsilon_1,\ldots,\varepsilon_n)$ and $\psi(u_0)=(\mu_1,\ldots,\mu_n)$.  Clearly each $\varepsilon_i\in\{0,1\}$ as $e_0$ is an idempotent, and also each $\mu_i\ne 0$ as $u_0$ is a unit.  Similarly as in the previous example, $a_0=e_0+u_0$ yields
\[
(t^{-1},\lambda_2,\ldots,\lambda_n)=\psi(a_0)=\psi(e_0)+\psi(u_0)=(\varepsilon_1,\ldots,\varepsilon_n)+(\mu_1,\ldots,\mu_n),
\]
so that $\mu_1=t^{-1}-\varepsilon_1$ and $\mu_i=\lambda_i-\varepsilon_i\in F$ if $i\geq 2$. Hence we have either $\psi(u)=(t^{-1},\mu_2,\ldots,\mu_n)$ or $\psi(u)=(t^{-1}-1,\mu_2,\ldots,\mu_n)$ where $\mu_i\in F$ if $i\geq 2$, which means that $u$ looks like $\operatorname{diag}(\alpha,\mu_2I,\ldots,\mu_nI)$ or $\operatorname{diag}(\alpha-I,\mu_2I,\ldots,\mu_nI)$ outside of finitely many rows. As before, we see that in none of these two cases $u$ can be surjective as an endomorphism of $(\bigoplus_{i\geq 1}F)^n$, and hence cannot be a unit in $S$. Therefore $a$ is not clean in $S$.

Let us prove now that $p(a)$ is unit-regular in $S$ for every $p(x)\in F[x]$.  We may assume that $p(x)$ is non-constant.  As $F$ is algebraically closed, there exists $\lambda\in F$ with $p(\lambda)=0$, so that $p(a)$ is a diagonal matrix with at least one diagonal entry zero. From this we see that it suffices to prove that
\[
z:=\left(\begin{array}{cc}p(\alpha)&0\\0&0\end{array}\right)
\]
is unit-regular in $S_2$. Write $\beta:=p(\alpha)$ and let $\beta'$ be any left inverse of $\beta$ in $R$. Define
\[
v:=\left(\begin{array}{cc}\beta'&0\\1-\beta\beta'&\beta\end{array}\right).
\]
Clearly, $v\in S_2$, and $v$ is invertible with $v^{-1}=\left(\begin{smallmatrix}\beta&1-\beta\beta'\\0&\beta'\end{smallmatrix}\right)$, and a quick verification shows that $zvz=z$. Thus $z\in\operatorname{ureg}(S_2)$ and hence $p(a)\in\operatorname{ureg}(S)$.
\end{proof}

\section{Open Question}

We end with a question motivated by the results of this paper. Example \ref{Ex:NoUltraPower} tells us that in general, even when all powers of an element are (unit-)regular, there needn't be a single element whose powers provide the inner inverses.  But perhaps this can be fixed by assuming more extensive ring-theoretic conditions, so we ask:

\begin{question}
If $R$ is a unit-regular ring and $a\in R$, is there some $w\in R$ such that for every $k\geq 1$, it happens that $w^k$ is an inner inverse for $a^k$?
\end{question}

\section*{Acknowledgements}

We thank K.\ Goodearl, D.\ Khurana, and T.\ Y.\ Lam for suggestions which led to improvements in the article.  This work was partially supported by a grant from the Simons Foundation (\#315828 to Pace Nielsen).  A part of this work was carried out during the second author's visit to Brigham Young University, and he thanks them for their hospitality and support during the visit.

\bibliographystyle{amsplain}
\bibliography{StronglyCleanUnitRegular_bib}

\end{document}